\theoremstyle{plain}
\newtheorem{thm}{Theorem}
\newtheorem{lem}{Lemma}[section]
\newtheorem{cor}[lem]{Corollary}
\newtheorem{prop}[thm]{Proposition}
\theoremstyle{definition}
\newtheorem{defi}[lem]{Definition}
\newtheorem{rem}[lem]{Remark}
\newtheorem{ex}[lem]{Example}
\newcommand{\R}{\mathbb{R}}
\newcommand{\Z}{\mathbb{Z}}
\newcommand{\C}{\mathbb{C}}
\newcommand{\CP}{\mathbb{CP}}
\newcommand{\bbH}{\mathbb{H}}
\newcommand{\K}{\mathbb{K}}
\newcommand{\A}{\mathcal{A}}
\newcommand{\G}{\mathcal{G}}
\newcommand{\asl}{\mathrm{asl}}
\newcommand{\Cl}{\mathrm{Cl}}
\newcommand{\M}{\mathrm{M}}
\newcommand{\osp}{\mathrm{osp}}
\newcommand{\half}{\frac{1}{2}}
\newcommand{\End}{\textup{End}}
\newcommand{\e}{\varepsilon}
\def\a{\alpha}
\def\b{\beta}
\def\e{\varepsilon}
\def\g{\gamma}
\def\G{\Gamma}
\def\om{\omega}
\def\l{\lambda}
\def\m{\mu}
\begin{document}

\title{Simple graded commutative algebras}

\author{Sophie Morier-Genoud
\hskip 1cm
Valentin Ovsienko}

\address{
Sophie Morier-Genoud,
Universit\'e Paris Diderot Paris 7,
UFR de math\'ematiques case 7012,
75205 Paris Cedex 13, France}

\address{
Valentin Ovsienko,
CNRS,
Institut Camille Jordan,
Universit\'e Claude Bernard Lyon~1,
43 boulevard du 11 novembre 1918,
69622 Villeurbanne cedex,
France}

\email{sophiemg@math.jussieu.fr,
ovsienko@math.univ-lyon1.fr}

\date{}

\subjclass{}

\begin{abstract}
We study the notion of $\G$-graded commutative algebra
for an arbitrary abelian group $\G$.
The main examples are the Clifford algebras
already treated in \cite{AM1}.
We prove that the Clifford algebras are the only
simple finite-dimensional associative
graded commutative algebras over $\R$ or $\C$.
Our approach also leads to
non-associative graded commutative algebras
extending the Clifford algebras.
\end{abstract}

\maketitle

\thispagestyle{empty}

\section{Introduction and the Main Theorems}

Let $\left(\G,+\right)$ be an abelian group and 
$\langle\,,\,\rangle:\G\times\G\to\Z_2$ a bilinear map.
An algebra $\A$ is called a $\G$-graded commutative
(or $\G$-commutative for short) if $\A$ is $\G$-graded in the usual sense:
$$
\A=\bigoplus_{\g\in\G}\A_\g
$$
such that $\A_\g\cdot\A_{\g'}\subset\A_{\g+\g'}$ and for all 
homogeneous elements $a,b\in\A$
one has
\begin{equation}
\label{ComPr}
a\,{}b=(-1)^{\left\langle\bar{a},\bar{b}\right\rangle}\,b\,{}a.
\end{equation}
where $\bar{a}$ and $\bar{b}$ are the corresponding degrees.

If $\G=\Z_2$ and $\langle\,,\,\rangle$ is the standard product,
then the above definition coincides with that of ``supercommutative algebra''.
The main examples of associative supercommutative algebras
are: the algebras of differential forms on manifolds or, more generally,
algebras of functions on supermanifolds.
These algebras cannot be simple
(i.e., they always contain a non-trivial proper ideal).

Study of $\G$-commutative algebras is a quite new subject.
We cite here the pioneering work of Albuquerque and Majid,
\cite{AM}, where the classical algebra of quaternions $\bbH$
is understood as a $\Z_2\times\Z_2$-commutative algebra.
This result was generalized for the Clifford algebras in \cite{AM1}.
In \cite{BMZ} a more general notion of $\b$-commutative algebra 
is considered and the structure of simple algebras is completely
determined.
An application of graded commutative algebras to 
deformation quantization is recently proposed in \cite{Phy}.

Ih this paper we also consider the quaternion algebra and the
Clifford algebras as graded commutative algebras.
Our grading is slightly different from that of \cite{AM,AM1}.
This difference concerns essentially the parity of the elements.

\subsection*{The first example.}
The starting point of this work is the following
observation, see \cite{MO}.
The quaternion algebra $\bbH$ is
$\Z_2\times\Z_2\times\Z_2$-commutative in the following sense.
Associate the ``triple degree'' to the standard basis elements of $\bbH$:
\begin{equation}
\label{DegH}
\begin{array}{rcl}
\bar{\e}&=&(0,0,0),\\[4pt]
\bar{i}&=&(0,1,1),\\[4pt]
\bar{j}&=&(1,0,1),\\[4pt]
\bar{k}&=&(1,1,0),
\end{array}
\end{equation}
where $\e$ denotes the unit.
The usual product of quaternions then
satisfies the condition (\ref{ComPr}),
where $\langle\,,\,\rangle$ is the usual \textit{scalar product} of 3-vectors.
Indeed, $\left\langle\bar{i},\,\bar{j}\right\rangle=1$ and similarly for $k$,
so that $i,j$ and $k$ anticommute with each other.
But,
$\left\langle\bar{i},\,\bar{i}\right\rangle=0$,
so that $i,j,k$ commute with themselves.

In fact, the first two components in (\ref{DegH}) contain the full information of the grading
of $\bbH$.
By ``forgetting'' the third component,
one obtains a $\Z_2\times\Z_2$-grading on the quaternions:
$$
\bar{1}=(0,0),\qquad
\bar{i}=(0,1),\qquad
\bar{j}=(1,0),\qquad
\bar{k}=(1,1)
$$
which is nothing but the Albuquerque-Majid grading.
However, the bilinear map~$\langle\,,\,\rangle$
in no longer the scalar product.
(It has to be replaced by the determinant of the $2\times2$-matrix
formed by the degrees of the elements.)
The elements $i$ and $j$ are odd in this grading while $k$ is even.
We think that it is more natural if these elements have the same parity.

\subsection*{The classification.}

We are interested in \textit{simple} $\G$-commutative algebras.
Recall that an algebra is called simple if it has
no proper (two-sided) ideal.
In the usual commutative associative case, a simple algebra
(over $\R$ or $\C$) is necessarily a division algebra.
The associative division algebras are classified by
(a particular case of) the classical Frobenius theorem.
Classification of simple $\G$-commutative algebras can
therefore be understood as an analog of the Frobenius theorem.

We obtain a classification
of simple associative $\G$-graded commutative algebras over $\R$ and $\C$.
The following theorem is the main result of this paper.

\begin{thm}
\label{CliffC}
Every  finite-dimensional simple associative $\G$-commutative
algebra over $\C$ or over $\R$ is isomorphic to a Clifford algebra. 
\end{thm}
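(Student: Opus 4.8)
The plan is to show that the hypotheses force $\A$ to be a \emph{twisted group algebra} of a finite abelian group, and then to match the possible twisted group algebras with the Clifford algebras.

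\emph{Reduction to a twisted group algebra.} We may assume $\A$ is unital (a finite-dimensional simple associative algebra always is). Since $(-1)^{\langle\bar a,\bar b\rangle}$ is an invertible scalar, for homogeneous elements $xa=0$ is equivalent to $ax=0$; and if $xa=0$ then, for homogeneous $y$, $xya=\pm xay=0=yxa$. Hence for $0\neq a$ homogeneous, $\{x\in\A:xa=0\}$ is a two-sided graded ideal not containing $1$, so it vanishes by simplicity, and therefore left and right multiplication by $a$ are bijective; thus \emph{every nonzero homogeneous element of $\A$ is invertible}. Consequently: (i) if $\langle\g,\g\rangle=1$ for some $\g$ in the support $\G_{\A}:=\{\g:\A_\g\neq0\}$ and $0\neq a\in\A_\g$, then $a^{2}=-a^{2}=0$, a contradiction, so $\langle\cdot,\cdot\rangle$ is alternating on $\G_{\A}$; (ii) products and inverses of nonzero homogeneous elements are nonzero homogeneous, so $\G_{\A}$ is a subgroup of $\G$, and we replace $\G$ by it; (iii) $\A_0$ is commutative (because $\langle 0,0\rangle=0$) and a division algebra, hence a field $k\in\{\R,\C\}$, which is central; (iv) for any $\g$ and $0\neq u_\g\in\A_\g$, every element of $\A_\g$ lies in $u_\g\cdot(u_\g^{-1}\A_\g)\subseteq u_\g\A_0$, so $\dim_k\A_\g=1$. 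Choosing such $u_\g$ with $u_0=1$ yields $\A=\bigoplus_{\g\in\G}k\,u_\g$ with $u_\g u_{\g'}=\sigma(\g,\g')u_{\g+\g'}$, where, by associativity, $\sigma:\G\times\G\to k^{\times}$ is a $2$-cocycle, and by (\ref{ComPr}) it satisfies $\sigma(\g,\g')=(-1)^{\langle\g,\g'\rangle}\sigma(\g',\g)$; in particular $\G$ is finite.

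\emph{The center.} A direct computation shows $\sum_\g c_\g u_\g$ is central iff $\langle\g,\cdot\rangle\equiv 0$ whenever $c_\g\neq 0$, so $Z(\A)=\bigoplus_{\g\in\G^{\perp}}k\,u_\g$ where $\G^{\perp}$ is the radical of $\langle\cdot,\cdot\rangle$. Since $\A$ is simple, $Z(\A)$ is a field; being commutative of $k$-dimension $|\G^{\perp}|$, it has $\R$-dimension at most $2$. Hence either $\G^{\perp}=0$ (so $Z(\A)=k$), or $k=\R$ and $\G^{\perp}=\{0,\delta\}\cong\Z_2$ with $u_\delta^{2}<0$, so that $Z(\A)\cong\C$ — the alternative $u_\delta^{2}>0$ would give $Z(\A)\cong\R\times\R$, not a field.

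\emph{Identification with Clifford algebras.} Since $\langle\cdot,\cdot\rangle$ is $\Z_2$-valued and bilinear, $2\G\subseteq\G^{\perp}$; so $\bar\G:=\G/\G^{\perp}$ is an elementary abelian $2$-group carrying a nondegenerate alternating form, whence $\bar\G\cong\Z_2^{2r}$ with the standard symplectic form. If $\G^{\perp}=0$, then $\A=k^{\sigma}[\Z_2^{2r}]$; rescaling the generators $u_\g$ attached to a symplectic basis so that their squares are $\pm 1$ presents $\A$, by generators and relations, as a Clifford algebra — $\Cl_{2r}(\C)\cong M_{2^{r}}(\C)$ when $k=\C$, and a real Clifford algebra $\Cl_{p,q}(\R)$ with $p+q=2r$ (the signs of the squares giving $(p,q)$) when $k=\R$; all such $\Cl_{p,q}$ are indeed simple. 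If $k=\R$ and $\G^{\perp}\cong\Z_2$, then using the central subfield $Z(\A)\cong\C$ as ground field and picking coset representatives gives $\A\cong\C^{\bar\sigma}[\bar\G]\cong\Cl_{2r}(\C)\cong M_{2^{r}}(\C)$ as a complex, hence a real, algebra: the real Clifford algebra of odd rank $2r+1$ whose volume element squares to $-1$. In every case $\A$ is a Clifford algebra.

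\emph{Main obstacle.} The delicate point is this last step: one must verify that a twisted group algebra of $\Z_2^{2r}$ with symplectic commutation factor is, up to the squares of the generators, exactly the Clifford presentation — equivalently, that over $\C$ all nondegenerate $\Z_2$-valued alternating forms on $\Z_2^{2r}$ are equivalent and produce $M_{2^{r}}(\C)$ — and, in the real case with nontrivial radical, correctly match $\Cl_{2r}(\C)$ viewed over $\R$ with the appropriate real Clifford algebra. The remaining arguments are routine manipulations with gradings and $2$-cocycles.
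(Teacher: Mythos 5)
Your argument is correct in its overall architecture but takes a genuinely different route from the paper's. Both proofs share the same preliminary reductions (the paper's Lemma \ref{lem1} and Corollaries \ref{EvenRrop}--\ref{GenCol}): homogeneous elements are not zero divisors, the support consists of even degrees, $\A_0$ is a field $k\in\{\R,\C\}$, and each homogeneous component is a line over $k$. From there the paper first proves Theorem \ref{RedThm} to normalize $\G$ to $\left(\Z_2\right)^n$ with the scalar product, and then runs a hands-on combinatorial argument on a minimal homogeneous generating set: it maximizes the pairwise anticommuting part $\{\a_1,\dots,\a_p\}$, normalizes the remaining generators $\b_i$ (Lemmas \ref{lem1alpha} and \ref{lembeta}), and eliminates them by exhibiting explicit proper ideals such as $\left(\a_p+\b_1\right)\A$ or $\left(\e+\widetilde{\a}\right)\A$ (Lemmas \ref{LastLeM} and \ref{PlohLem}). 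You instead recognize $\A$ as a twisted group algebra $k^\sigma[\G]$ of the finite support group, compute the center as the span of the radical $\G^{\perp}$ of the commutation form, bound $\G^{\perp}$ by simplicity, and invoke the classification of nondegenerate alternating $\Z_2$-valued forms to reduce to $\Z_2^{2r}$ with the symplectic form. This is more structural, bypasses Theorem \ref{RedThm} entirely, and is close in spirit to the treatment of simple $\b$-commutative algebras in \cite{BMZ}; the paper's argument is more elementary and self-contained, at the cost of length.

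One detail in your final step is stated incorrectly, though it does not sink the proof. A symplectic basis $e_1,f_1,\dots,e_r,f_r$ gives generators $u_{e_i},u_{f_i}$ that anticommute only within each pair, so this is \emph{not} literally the Clifford presentation, and the parenthetical claim that the signs of the squares give $(p,q)$ is false: for $r=2$ with all squares equal to $\e$ one gets $\M_2(\R)\otimes\M_2(\R)\cong\M_4(\R)$, which is $\Cl_{2,2}$, not $\Cl_{4,0}\cong\M_2(\bbH)$. The correct finish is that $\A$ factors as the tensor product of the $r$ pairwise commuting subalgebras $A_i=\langle u_{e_i},u_{f_i}\rangle$, each a quaternion algebra ($\M_2(\C)$ over $\C$; $\M_2(\R)$ or $\bbH$ over $\R$), and every such tensor product is $\M_{2^r}(\C)$, $\M_{2^r}(\R)$ or $\M_{2^{r-1}}(\bbH)$, each of which is a Clifford algebra of even rank; alternatively one can build a pairwise anticommuting generating set $v_1=u_{e_1}$, $v_2=u_{f_1}$, $v_3=u_{e_1}u_{f_1}u_{e_2}$, and so on. Since the theorem only asserts isomorphism to \emph{some} Clifford algebra, this repairs the step without changing your strategy.
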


\noindent
The well-known classification of simple Clifford algebras (cf. \cite{Por})
readily gives a complete list:
\begin{enumerate}
\item
The algebras $\Cl_{2m}(\C)$ 
($\cong\M_{2^m}(\C)$)
are the only simple associative
$\G$-commuta\-tive algebras over $\C$.

\item
The real Clifford algebras $\Cl_{p,q}$ with $p-q\not=4k+1$
and the algebras $\Cl_{2m}(\C)$ viewed as algebras over $\R$
are the only real simple associative
$\G$-commuta\-tive algebras.
\end{enumerate}

\noindent
Note that the Clifford algebras $\Cl_{2m+1}(\C)$ and $\Cl_{p,q}$ with $p-q=4k+1$
are of course not simple; however these algebras are graded-simple.

As a consequence of the above classification, we have
a quite amazing statement.

\begin{cor}
\label{CliffCor}
The associative algebra $\M_n$ of $n\times{}n$-matrices
over $\R$ or~$\C$,
can be realized as $\G$-commutative algebra
for some $\G$, if and only if $n=2^m$. 
\end{cor}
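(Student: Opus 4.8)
\emph{Proof proposal.} The plan is to read the statement off directly from Theorem~\ref{CliffC} and the classification of simple Clifford algebras recalled right after it; the corollary is essentially a translation exercise, so I expect no genuine obstacle, only some bookkeeping.

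For the ``if'' direction, suppose $n=2^m$. Over $\C$ one has the classical isomorphism $\Cl_{2m}(\C)\cong\M_{2^m}(\C)$, which is precisely item~(1) of the list following Theorem~\ref{CliffC}, so $\M_{2^m}(\C)$ is $\G$-commutative. Over $\R$ I would use $\Cl_{m,m}\cong\M_{2^m}(\R)$ (which follows from $\Cl_{p+1,q+1}\cong\M_2(\Cl_{p,q})$ and $\Cl_{0,0}=\R$); since $p-q=0$ is not of the form $4k+1$, the algebra $\Cl_{m,m}$ appears in item~(2) of that list and hence carries a $\G$-commutative structure for a suitable $\G$. Thus $\M_{2^m}$ is $\G$-commutative in either case.

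For the ``only if'' direction, assume $\M_n$ (over $\R$ or $\C$) admits some $\G$-commutative structure. Being a full matrix algebra, $\M_n$ is simple as an associative algebra, so Theorem~\ref{CliffC} identifies it with a Clifford algebra, and it remains to observe that the only Clifford algebras isomorphic to $\M_n(\C)$ or $\M_n(\R)$ have $n$ a power of $2$. In the complex case this is immediate, since the simple complex Clifford algebras are exactly the $\Cl_{2m}(\C)\cong\M_{2^m}(\C)$ (the odd ones $\Cl_{2m+1}(\C)$ being non-simple). In the real case I would invoke the Bott-periodicity description of $\Cl_{p,q}$ as a matrix algebra over $\R$, $\C$ or $\bbH$, or a sum of two such: $\Cl_{p,q}\cong\M_n(\R)$ occurs exactly when $p-q\equiv0,2\pmod 8$, and then the dimension identity $n^2=\dim_\R\Cl_{p,q}=2^{p+q}$ forces $n=2^{(p+q)/2}$; moreover $\Cl_{2m}(\C)$, viewed as a real algebra, has centre $\C$ and so is never of the form $\M_n(\R)$, contributing nothing extra.

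The only point requiring a little care is this last, real-case step: one must be sure that no $\Cl_{p,q}$ slips in as $\M_n(\R)$ with $n$ not a power of $2$. This is exactly what the dimension count rules out once the periodicity table has told us which $\Cl_{p,q}$ are matrix algebras over $\R$ (as opposed to over $\C$ or $\bbH$), so the argument closes without difficulty.
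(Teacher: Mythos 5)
Your proposal is correct and follows essentially the same route as the paper, which treats the corollary as an immediate consequence of Theorem~\ref{CliffC} together with the classification list of simple Clifford algebras (the ``if'' part from $\Cl_{2m}(\C)\cong\M_{2^m}(\C)$ and $\Cl_{m,m}\cong\M_{2^m}(\R)$, the ``only if'' part from simplicity of $\M_n$ plus the dimension count $n^2=2^{p+q}$). Your extra care about $\Cl_{2m}(\C)$ viewed over $\R$ and about the periodicity table is sound but not strictly needed, since the dimension count alone already forces $n$ to be a power of $2$.
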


Note that gradings on the algebras of matrices and more generally
on associative algebras,
but without the commutativity assumption, is an important subject,
see \cite{Bah,Bah1,Eld1} and references therein.

\subsection*{Universality of $\left(\Z_2\right)^n$-grading.}
It turns out that the  $\left(\Z_2\right)^n$-grading is the most general,
and $\langle\,,\,\rangle$ can always be reduced to the
scalar product.

\begin{thm}
\label{RedThm}
(i)
If the abelian group $\G$ is finitely generated, then
for an arbitrary $\G$-commutative algebra $\A$, there exists $n$ such that $\A$ is
$\left(\Z_2\right)^n$-commutative.

(ii)
The bilinear map~$\langle\,,\,\rangle$
can be chosen as the usual scalar product.
\end{thm}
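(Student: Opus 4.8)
The plan is to reduce everything to the structure of the pair $(\G,\langle\,,\,\rangle)$ together with the subgroup actually realized by the homogeneous components of $\A$. First I would observe that only the \emph{support} $\G_\A:=\{\g\in\G\mid\A_\g\neq 0\}$ matters, and that it is a subgroup of $\G$ whenever $\A$ has no proper ideal or, more simply, whenever we restrict attention to the subalgebra generated by the homogeneous components (for the statement as written, replacing $\G$ by $\langle\G_\A\rangle$ costs nothing). Since $\G$ is finitely generated abelian, $\G_\A$ is finitely generated abelian, so $\G_\A\cong\Z^r\oplus\Z_{n_1}\oplus\cdots\oplus\Z_{n_s}$. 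The first key point is that the commutativity constraint (\ref{ComPr}) only sees $\g$ through $\langle\g,-\rangle\in\Hom(\G,\Z_2)$ and through $\langle\g,\g\rangle$; in particular the bilinear form $\langle\,,\,\rangle$ factors through $\G_\A/2\G_\A$, which is an elementary abelian $2$-group, i.e. a vector space over $\Z_2$ of some dimension $n$. So I would first pass to $\G':=\G_\A/2\G_\A\cong(\Z_2)^n$: the point to check is that the product on $\A$ is genuinely $\G'$-graded, i.e. that $2\G_\A$ acts trivially in a way compatible with the algebra structure. This requires knowing that elements whose degrees differ by an element of $2\G_\A$ behave identically under (\ref{ComPr}); the subtlety is whether a nonzero component $\A_\g$ and $\A_{\g+2\delta}$ can coexist and force a relation — here one invokes that $\langle\,,\,\rangle$ is $\Z_2$-valued and bilinear, so $\langle 2\delta,-\rangle=0$, hence the two components are "commutation-indistinguishable" and may be merged. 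This is the step I expect to be the main obstacle: making the merging rigorous as an isomorphism of \emph{algebras}, not just of graded vector spaces, since one must check the merged multiplication is still well-defined and associative — presumably this is where finite-dimensionality or simplicity is quietly used, or where one argues that $\G_\A$ was elementary $2$-torsion to begin with.

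Granting part (i), for part (ii) I would work purely with the nondegenerate (or possibly degenerate) $\Z_2$-bilinear form $\langle\,,\,\rangle$ on $V:=(\Z_2)^n$. The claim is that after an automorphism of $\G=V$ the form becomes the standard scalar product $\sum x_iy_i$. Over $\Z_2$ one cannot simply diagonalize, because of alternating forms; but the relevant observation is that condition (\ref{ComPr}) is unchanged if we replace $\langle\,,\,\rangle$ by $\langle\,,\,\rangle+Q$ where $Q$ is any function with $Q(\bar a,\bar b)=Q(\bar b,\bar a)$ coming from a quadratic refinement — more precisely, the \emph{symmetric} part is what matters, and one is free to add to $\langle\g,\g'\rangle$ any term of the form $q(\g)+q(\g')+q(\g+\g')$ for a function $q:V\to\Z_2$, since such a term is symmetric and its effect on (\ref{ComPr}) can be absorbed by rescaling homogeneous components by signs $(-1)^{q(\bar a)}$. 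Thus $\langle\,,\,\rangle$ is only well-defined up to adding a "coboundary," i.e. up to the class of the associated quadratic form. I would then use the classification of $\Z_2$-valued bilinear forms modulo such coboundaries: every such class, after change of basis in $V$, is represented by the diagonal form $\sum_{i=1}^{n}x_iy_i$ (one enlarges $n$ if necessary — e.g. a symplectic summand $x_1y_2+x_2y_1$ becomes, after adjoining one coordinate, $x_1y_1+x_2y_2+x_3y_3$ restricted appropriately, matching the trick already used in the quaternion example in the Introduction, where passing from the $2$-component to the $3$-component grading replaces a determinant by a scalar product).

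Concretely, the steps in order are: (1) reduce $\G$ to the subgroup generated by the support; (2) show the form and the commutativity rule factor through $\G_\A/2\G_\A\cong(\Z_2)^n$, and upgrade this to an algebra isomorphism, yielding (i); (3) identify the freedom in choosing $\langle\,,\,\rangle$ as modification by a quadratic coboundary, realized by sign rescalings of the homogeneous components; (4) invoke the normal form of $\Z_2$-bilinear forms under $GL_n(\Z_2)$ together with this coboundary freedom, possibly increasing $n$, to bring $\langle\,,\,\rangle$ to $\sum x_iy_i$, giving (ii). The routine linear algebra over $\Z_2$ in step (4) I would not spell out; the genuinely delicate point, as noted, is step (2) — verifying that the quotient construction respects the associative product — and I would handle it either by first proving $\G_\A$ is necessarily $2$-torsion for a \emph{simple} $\A$ (which would make the quotient trivial) or by a direct check that the multiplication constants of $\A$ depend on degrees only through their images in $(\Z_2)^n$.
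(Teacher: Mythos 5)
Your part (i) is essentially the paper's argument in different clothing: the paper replaces each factor $\Z$ or $\Z_{2^{k}}$ of $\G$ by $\Z_2$ via the unique nontrivial homomorphism to $\Z_2$ and discards the factors on which the pairing vanishes, which amounts to your passage to $\G/2\G$ restricted to the part where the form is nontrivial. The point you flag as the "main obstacle" is in fact immediate: coarsening the grading does not change the algebra or its product at all, and since $\langle\,,\,\rangle$ is $\Z_2$-bilinear it vanishes on $2\G$, so condition (\ref{ComPr}) holds verbatim for the coarsened homogeneous components. No appeal to simplicity or finite-dimensionality is needed (and none is available, since the theorem is stated for arbitrary $\A$).

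Part (ii) contains a genuine error. You claim that $\langle\g,\g'\rangle$ may be modified by any term $q(\g)+q(\g')+q(\g+\g')$ because such a modification "can be absorbed by rescaling homogeneous components by signs $(-1)^{q(\bar a)}$." Rescaling homogeneous elements by scalars never changes whether two elements commute or anticommute: if $a'=(-1)^{q(\bar a)}a$ and $b'=(-1)^{q(\bar b)}b$, then $a'b'$ and $b'a'$ acquire the same overall sign, so $a'b'=(-1)^{\langle\bar a,\bar b\rangle}b'a'$ with the \emph{original} form, not the modified one. (What such a rescaling does accomplish is to trivialize the symmetric coboundary twist of the multiplication, i.e.\ it gives an isomorphism $(\A,\cdot)\cong(\A,\ast_q)$ with $a\ast_q b=(-1)^{q(\bar a)+q(\bar b)+q(\bar a+\bar b)}\,ab$; but both products have the same commutation form, which is precisely why this freedom cannot be used to alter $\langle\,,\,\rangle$.) With the coboundary freedom removed, what remains of your step (4) is the "routine linear algebra over $\Z_2$" that you explicitly decline to carry out --- and that is the entire content of part (ii). The paper does it directly: for any symmetric bilinear $\beta$ on $\left(\Z_2\right)^n$ it constructs an explicit homomorphism $\sigma:\left(\Z_2\right)^n\to\left(\Z_2\right)^N$ with $N\leq 2n$ (a triangular system of vectors realizing the off-diagonal entries $\beta_{ij}$, then at most $n$ extra coordinates to adjust the diagonal) such that $\beta(x,y)=\langle\sigma(x),\sigma(y)\rangle$. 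Your instinct to enlarge $n$ to absorb hyperbolic summands, as in the quaternion example of the Introduction, is the right mechanism and could be turned into a proof via the normal form of symmetric bilinear forms over $\Z_2$; but as written your argument both rests on a false reduction and omits the substantive construction.
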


\noindent
This theorem is proved in Section \ref{RedSect}.
This is the main tool for our classification of simple associative
$\G$-commutative algebras.

\subsection*{Non-associative extensions of Clifford algebras.}

Similarly to the grading of the quaternion algebra
(\ref{DegH}), all the elements of Clifford algebras 
in our grading correspond only to the
even elements of $\left(\Z_2\right)^{n+1}$.
It is therefore interesting to ask the following question.
{\it Given a Clifford algebra, is there a natural
larger algebra that contains  this Clifford algebra as an even part?}

We will show (see Corollary \ref{EvenRrop}) that such an extension
of a Clifford algebra cannot be associative.
In Section \ref{NonAsSect}, we construct 
simple $\left(\Z_2\right)^{n+1}$-commutative
\textit{non-associative} algebras $\A$ such that
$\A^0=\Cl_n$.
We require one additional condition: existence of at least
one \textit{odd derivation}.
This means, the even part $\A_0$ and the odd part $\A_1$
are not separated and can be ``mixed up''.

We have a complete classification only in the simplest case
of $\Z_2$-commutative algebras.
We obtain exactly two 3-dimensional algebras.
One of these algebras has the basis $\{\e;a,b\}$,
where $\bar{\e}=0$ and $\bar{a}=\bar{b}=1$,
satisfying the relations
\begin{equation}
\label{aslA}
\begin{array}{l}
\e\,{}\e=\e\\[6pt]
\e\,{}a=\half\,a,
\quad
\e\,{}b=\half\,b,\\[6pt]
a\,{}b=\e.
\end{array}
\end{equation}
This algebra is called
\textit{tiny Kaplansky superalgebra}, 
see \cite{McC} and also \cite{BE} and denoted~$K_3$.
It was rediscovered in~\cite{Ovs} (under the name $\asl_2$)
and further studied in \cite{MG}.
The corresponding algebra of derivations is the simple Lie superalgebra
$\osp(1|2)$.
We believe that the natural extension of Clifford algebras
with non-trivial odd part are the algebras:
$$
\Cl_n\otimes_\C{}K_3,
\qquad
\Cl_{p,q}\otimes_\R{}K_3
$$
in the complex and in the real case, respectively.

We will however construct another series of extensions
of Clifford algebras, that have different properties.
For instance, these algebras have the unit element.

We hope that some of the new algebras constructed in this paper
may be of interest for mathematical physics.

\bigskip

\noindent \textbf{Acknowledgments}.
The main part of this work was done at the
Mathematisches Forschungsinstitut Oberwolfach (MFO) during a 
\textit{Research in Pairs} stay from April~5 to April 18 2009.
We are grateful to MFO for hospitality.
We are pleased to thank Yu. Bahturin, A. Elduque and D. Leites
for enlightening discussions.


\section{Preliminary results}\label{Prems}

We start with simple results and observations.


\subsection{Clifford algebra is indeed commutative}\label{CliffCom}

A real Clifford algebra $\Cl_{p,q}$
is an associative algebra with unit $\e$
and $n=p+q$ generators
$\a_1,\ldots,\a_n$ subject to the relations
$$
\a_i\a_j=-\a_j\a_i,
\quad{}i\not=j.
\qquad
\a_i^2=\left\{
\begin{array}{rl}
\e,&1\leq{}i\leq{}p\\[4pt]
-\e,&p<i\leq{}n.
\end{array}
\right.
$$
The complex Clifford algebra $\Cl_n=\Cl_{p,q}\otimes\C$ 
can be defined by the same formul\ae,
but one can always choose the generators in such a way that $\a_i^2=\e$
for all $i$.

Let us show that $n$-generated Clifford algebras are
$\left(\Z_2\right)^{n+1}$-commutative.
The construction is the same in the real and complex cases.
We assign the following degree to every basis element.
\begin{equation}
\label{DegCl}
\begin{array}{rcl}
\overline{\a_1}&=&(1,0,0,\ldots,0,1),\\[4pt]
\overline{\a_2}&=&(0,1,0,\ldots,0,1),\\[4pt]
\cdots&&\\[4pt]
\overline{\a_n}&=&(0,0,0\ldots,1,1).
\end{array}
\end{equation}
One then has $\langle\a_i,\a_j\rangle=1$ so that
the anticommuting generators $\a_i$ and $\a_j$
become commuting in the $\left(\Z_2\right)^{n+1}$-grading sense.
Furthermore, two monomials
$\a_{i_1}\cdots\a_{i_k}$ and $\a_{j_1}\cdots\a_{j_\ell}$
commute if and only if either $k$ or $\ell$ is even.
It worth noticing that every monomial is homogeneous and
there is a one-to-one correspondence between
the monomial basis of the Clifford algebra and the even
elements of $\left(\Z_2\right)^{n+1}$.

We notice that the degrees of elements in (\ref{DegCl}) are purely even.
We will show in Section \ref{ComDivZ} 
that this is not a coincidence.

\begin{rem}
{\rm
A $\left(\Z_2\right)^n$-graded commutative structure of the Clifford algebras
was defined in \cite{AM1}:
the degree of a generator $\a_i$ is the $n$-vector
$(0,\ldots,0,1,0,\ldots,0)$, where 1 stays at the $i$-th position.
As in the case of the quaternion algebra, our $\left(\Z_2\right)^{n+1}$-grading
is equivalent to that of Albuquerque-Majid.
However, the symmetric bilinear map~of \cite{AM1} is different of ours,
namely
$$
\b(\g,\g')=\sum_i\g_i\,\g'_i+
\Big(\sum_i\g_i\Big)\,\Big(\sum_j\g'_j\Big),
$$
instead of the scalar product.
We will show in the next section that
an $n$-generated Clifford algebra cannot be realized as
$\left(\Z_2\right)^{k}$-commutative algebra with $k<n+1$,
provided the bilinear map~$\langle\,,\,\rangle$ is the scalar product.

Let us also mention that the classical $\Z_2$-grading of the Clifford algebras,
see~\cite{ABS}
can be obtained from our grading by projection to the last $\Z_2$-component
in~$\left(\Z_2\right)^{n+1}$.
Indeed, degree 1 is then assigned to every generator.
}
\end{rem}

\subsection{Commutativity, simplicity and zero divisors}\label{ComDivZ}

Unless we specify the ground field,
the results of this section hold over $\C$ or $\R$.
A simple associative commutative
algebra is a division algebra.
Indeed, for every $a\in\A$, the set 
\begin{equation}
\label{TwiSid}
\{b\in \A \,|\, ab=0\}
\end{equation}
is a (two-sided) ideal.

A simple $\G$-commutative associative algebra
can have zero divisors.
Consider for example the complexified quaternion algebra
$\bbH\otimes\C\cong\M_2(\C)$.
This is of course a simple $\left(\Z_2\right)^3$-commutative algebra.
It has elements $\a$ such that $\a^2=\e$.
One then obviously obtains:
$\left(\e+\a\right)\left(\e-\a\right)=0$.

The following lemma shows however that
there is a property of simple $\G$-commu\-tative associative algebras
similar to the usual commutative case.

\begin{lem}
\label{lem1}
Every homogeneous element
of a simple associative $\G$-commutative algebra is
not a left or right zero divisor.
\end{lem}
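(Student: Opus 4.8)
The plan is to show that for any nonzero homogeneous element $a\in\A_\g$, the right annihilator $\{b\in\A\mid ab=0\}$ is a two-sided ideal; by simplicity it is then either $0$ or all of $\A$, and the latter is excluded because $\A$ has a unit (Clifford-type algebras do, and more to the point the argument only needs $a$ not to annihilate everything, which follows once we know the annihilator is an ideter). First I would observe that this set is clearly a left ideal and a right submodule, so the only thing to check is that it is stable under left multiplication by homogeneous elements: if $ab=0$ and $c\in\A_{\g'}$ is homogeneous, we must show $a(cb)=0$. Here the $\G$-commutativity enters: I would write $acb = (-1)^{\langle\bar a,\bar c\rangle} c a b = 0$, using \eqref{ComPr} to move $a$ past $c$, and then associativity to regroup. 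Hence $acb=0$, so $cb$ lies in the annihilator, and the annihilator is a genuine two-sided ideal.

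By simplicity, this ideal is either $\{0\}$ or $\A$. If it equals $\A$, then in particular $a\cdot\e=0$ where $\e$ is the unit, forcing $a=0$, a contradiction. (If one does not wish to assume a unit at this stage, the same conclusion follows by noting that $a\cdot a=0$ would put $a$ itself in the annihilator and then a short argument with the ideal generated by $a$ collapses things; but since the algebras under consideration are unital it is cleanest to invoke the unit.) Therefore the annihilator is $\{0\}$, i.e.\ $a$ is not a left zero divisor. The statement that $a$ is not a right zero divisor is proved symmetrically, considering the left annihilator $\{b\mid ba=0\}$ and using \eqref{ComPr} to move $a$ to the other side of an arbitrary homogeneous $c$.

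The only delicate point is the interplay between homogeneity and \eqref{ComPr}: the sign rule \eqref{ComPr} applies only to homogeneous elements, so it is essential that we test ideal-stability against \emph{homogeneous} $c$ only — this suffices because the $\A_{\g'}$ span $\A$ and the annihilator is already known to be a linear subspace closed under addition. I expect no real obstacle here; the lemma is essentially the observation that the classical ``annihilator is an ideal'' argument survives graded commutativity because conjugation by a homogeneous element only introduces a scalar $\pm1$. The one thing to be careful about in the write-up is not to implicitly use commutativity in the wrong order when regrouping $acb$, and to handle the left-zero-divisor and right-zero-divisor cases as genuinely separate (mirror-image) computations rather than claiming one follows formally from the other without comment.
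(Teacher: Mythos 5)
Your proof is correct and follows essentially the same route as the paper's: both show that the annihilator $\{b \mid ab=0\}$ of a homogeneous $a$ is a two-sided ideal by using \eqref{ComPr} to move $a$ past an arbitrary homogeneous factor at the cost of a sign (the paper packages this as $ac=\widetilde c\,a$ with $\widetilde c=\sum_\g(-1)^{\langle\bar a,\g\rangle}c_\g$, which is exactly your componentwise computation), and then invoke simplicity. The only slip is terminological: that annihilator is \emph{clearly a right ideal} (not a left ideal), and the nontrivial step --- which you do carry out correctly --- is precisely establishing the left-ideal property.
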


\begin{proof}
If $a$ is a homogeneous element then 
for every $c\in\A$, there exists $\widetilde{c}\in\A$ such that
$ac=\widetilde{c}a$.
Indeed, writing $c$ as a sum of homogeneous terms one obtains:
$$
c=\sum_{\g\in\G}c_\g,
\qquad 
\widetilde{c}=\sum_{\g\in\G}(-1)^{\bar{a}\bar{\g}}\,c_\g.
$$
It follows that the set (\ref{TwiSid}) is again a two-sided ideal.
By simplicity of $\A$ this ideal has to be trivial.
\end{proof}

\begin{defi}
{\rm
Let us introduce the \textit{parity function}
\begin{equation}
\label{Parity}
p(a)=\left\langle
\bar{a},\,\bar{a}
\right\rangle,
\end{equation}
where $a,b\in\A$ are homogeneous.
A $\G$-commutative algebra is then split as a vector space into
$\A=\A^0\oplus\A^1$, where $\A^0$ and $\A^1$ are generated
by even and odd elements, respectively.
}
\end{defi}

One thus obtains a $\Z_2$-grading on $\A$,
that however, does not mean $\Z_2$-commuta\-tivity of~$\A$.
In particular, the even subspace $\A^0$ is a subalgebra of $\A$
which is not commutative in general.

Lemma \ref{lem1} has several important corollaries.

\begin{cor}
\label{EvenRrop}
Every simple associative $\G$-commutative algebra $\A$ is
even, that is, $\A=\A^0$.
\end{cor}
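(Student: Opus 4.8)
The plan is to show that an odd homogeneous element leads to a contradiction via Lemma \ref{lem1}. Suppose $a \in \A$ is homogeneous with $p(a) = \langle \bar a, \bar a\rangle = 1$. The commutativity relation \eqref{ComPr} applied with $b = a$ gives $a\,a = (-1)^{\langle\bar a,\bar a\rangle}\,a\,a = -a\,a$, hence $2\,a^2 = 0$, and since we work over $\C$ or $\R$ this forces $a^2 = 0$. But $a^2 = 0$ means $a$ is a left (and right) zero divisor unless $a = 0$: indeed $a \cdot a = 0$ with $a \neq 0$ contradicts Lemma \ref{lem1}, which asserts that no nonzero homogeneous element of a simple associative $\G$-commutative algebra is a left or right zero divisor. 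Therefore $a = 0$, so $\A$ contains no nonzero odd homogeneous elements.

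It remains to note that this kills the whole odd subspace. By the definition preceding the corollary, $\A^1$ is spanned by the odd homogeneous elements; since every such element vanishes, $\A^1 = 0$, and hence $\A = \A^0$ is even.

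The argument is short and the only subtlety is the appeal to the ground field: the step $2a^2 = 0 \implies a^2 = 0$ uses that $2$ is invertible, which is exactly why the statement is restricted to $\R$ and $\C$ (as flagged at the start of Section \ref{ComDivZ}). I do not anticipate a genuine obstacle here; Lemma \ref{lem1} does all the heavy lifting, and the corollary is essentially the observation that an odd element must square to zero and therefore cannot exist in a simple algebra.
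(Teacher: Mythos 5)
Your argument is correct and is essentially the paper's own proof: the authors likewise note that an odd homogeneous element anticommutes with itself, hence squares to zero, contradicting Lemma \ref{lem1}. Your extra remark about dividing by $2$ is a harmless elaboration of the same step.
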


\noindent
Indeed, an odd element $a\in\A^1$ anticommutes with itself, so that
$a^2=0$. 
But this is impossible by Lemma~\ref{lem1}.
It follows that the odd part of $\A$
is trivial, that is, $\A^1=\{0\}$.

Denote by $\A_0$ the subalgebra of $\A$ consisting of 
homogeneous elements
of degree~$0\in\G$.
Another corollary of Lemma \ref{lem1} is as follows.

\begin{cor}
\label{FroCol}
Let $\A$ be a simple associative $\G$-commutative algebra, then:

(i)
In the complex case, one has $\A_0=\C$;

(ii)
In the real case, $\A_0=\R$ or $\C$ (viewed as an $\R$-algebra).
\end{cor}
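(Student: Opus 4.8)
The plan is to exploit Lemma~\ref{lem1} together with the fact that $\A_0$ is itself a simple associative algebra over the ground field, whose multiplicative structure is therefore severely restricted. First I would observe that $\A_0$ is closed under multiplication (since $0+0=0$ in $\G$) and contains the unit, so it is a genuine subalgebra. The key point is that $\A_0$ is commutative: for homogeneous $a,b\in\A_0$ the commutation rule \eqref{ComPr} gives $ab=(-1)^{\langle\bar a,\bar b\rangle}ba=(-1)^{\langle 0,0\rangle}ba=ba$, since the bilinear map is $\Z_2$-valued and kills the zero degree. Thus $\A_0$ is a finite-dimensional commutative associative unital algebra over $\C$ (resp.\ $\R$).

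Next I would show that $\A_0$ has no nontrivial idempotents, equivalently no zero divisors. Here I invoke Lemma~\ref{lem1}: every homogeneous element of $\A$ — in particular every element of $\A_0$ — fails to be a left or right zero divisor in $\A$, hence a fortiori in the subalgebra $\A_0$. So $\A_0$ is a finite-dimensional commutative associative unital algebra over $\C$ or $\R$ with no zero divisors, i.e.\ an integral domain that is finite-dimensional over the base field. By the classical argument (for $x\in\A_0$ nonzero, the descending chain of principal ideals $x\A_0\supseteq x^2\A_0\supseteq\cdots$ stabilizes, or simply: multiplication by $x$ is an injective linear endomorphism of a finite-dimensional space, hence surjective), such an algebra is a field, and being a finite field extension it is a finite-dimensional division algebra over $\C$ or $\R$.

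Finally I would invoke the Frobenius-type classification of finite-dimensional commutative division algebras: over $\C$ the only one is $\C$ itself (as $\C$ is algebraically closed), which gives part~(i); over $\R$ the only finite-dimensional commutative (associative) division algebras are $\R$ and $\C$, which gives part~(ii).

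I do not anticipate a serious obstacle here: the whole statement is essentially the combination of three standard facts — commutativity of $\A_0$ forced by the vanishing of $\langle\,,\,\rangle$ on degree $0$, absence of zero divisors supplied directly by Lemma~\ref{lem1}, and the elementary structure theory of finite-dimensional commutative division algebras. The only mild care needed is in phrasing why "no zero divisors in $\A$" descends to "no zero divisors in $\A_0$" (it is immediate, since $\A_0\subseteq\A$) and why a finite-dimensional domain over a field is a field (the injective-implies-surjective linear-algebra argument), both of which are routine.
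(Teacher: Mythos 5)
Your proof is correct and follows essentially the same route as the paper: the paper's own argument simply notes that $\A_0$ is a commutative associative division algebra (commutativity from $\langle 0,0\rangle=0$, absence of zero divisors from Lemma~\ref{lem1}, finite-dimensionality turning a domain into a division algebra) and then applies the classical Frobenius theorem. You have merely spelled out the steps the paper leaves implicit; the only nitpick is your parenthetical "equivalently no zero divisors" for "no nontrivial idempotents," which is not an equivalence in general, though it plays no role in your actual argument.
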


\noindent
Indeed, the space $\A_0$ is a commutative associative
division algebra. 
By the classical Frobenius theorem, $\A_0=\R$ or $\C$
in the real case and $\A_0=\C$ in the complex case.

A strengthened version of the above corollary in the complex case
is as follows.

\begin{cor}
\label{GenCol}
If $\A$ is a simple complex associative 
$\left(\Z_2\right)^n$-commutative algebra
and~$\A_\g$ is a non-zero homogeneous component of $\A$, then
$\dim\A_\g=1$.
Furthermore,
there exists $\a\in\A_\g$ such that $\a^2=\e$.
\end{cor}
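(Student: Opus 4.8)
The plan is to leverage Lemma~\ref{lem1} together with the structure of $\left(\Z_2\right)^n$ as a torsion group. First I would take a nonzero homogeneous component $\A_\g$ and pick any nonzero $a\in\A_\g$. Since $\g\in\left(\Z_2\right)^n$ satisfies $2\g=0$, the element $a^2$ lies in $\A_{2\g}=\A_0$, which by Corollary~\ref{FroCol}(i) equals $\C\,\e$. Hence $a^2=\l\,\e$ for some $\l\in\C$. Now I would observe that $\l\neq0$: if $\l=0$ then $a$ would be a zero divisor (indeed $a\cdot a=0$ with $a\neq0$), contradicting Lemma~\ref{lem1}. Because the ground field is $\C$, we may rescale: replacing $a$ by $\l^{-1/2}a$ gives an element $\a\in\A_\g$ with $\a^2=\e$. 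This already produces the required square root of the unit.

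For the dimension claim, I would argue that any other element $b\in\A_\g$ is a scalar multiple of $\a$. Consider $\a b\in\A_{2\g}=\A_0=\C\,\e$, so $\a b=\mu\,\e$ for some $\mu\in\C$. Multiplying on the left by $\a$ and using $\a^2=\e$ gives $b=\mu\,\a$. Hence $\A_\g=\C\,\a$ is one-dimensional. (One must check that left multiplication by $\a$ is injective on $\A$, but this is immediate from Lemma~\ref{lem1} since $\a$ is homogeneous and nonzero; alternatively $\a^2=\e$ shows $\a$ is invertible.)

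The main obstacle — really the only subtle point — is ensuring we are genuinely in the $\left(\Z_2\right)^n$-graded setting so that $2\g=0$ and thus $a^2\in\A_0$; this is precisely what lets us invoke Corollary~\ref{FroCol}. In a general $\G$-commutative algebra $\A_{2\g}$ need not be $\A_0$, so the torsion hypothesis is essential, and it is exactly here that Theorem~\ref{RedThm} (which guarantees we may always pass to a $\left(\Z_2\right)^n$-grading) is doing the heavy lifting in the background. Everything else is a short computation using associativity and the fact that $\A_0=\C\,\e$.
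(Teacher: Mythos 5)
Your proof is correct and follows essentially the same route as the paper: the key observations in both are that $a\cdot b\in\A_{2\g}=\A_0=\C\,\e$ because $\G=\left(\Z_2\right)^n$, and that Lemma~\ref{lem1} forbids such products from vanishing, which forces $\dim\A_\g\leq 1$ and lets you rescale over $\C$ to get $\a^2=\e$. You merely spell out the details (nonvanishing of $a^2$, the cancellation $\a(b-\mu\a)=0$) that the paper leaves implicit.
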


\noindent
Indeed, for every $a,b\in\A_\g$, the product
$ab$ belongs to $\A_0$ since $\G=\left(\Z_2\right)^n$.
This product is different from zero (Lemma~\ref{lem1}).
It follows that every homogeneous component is at most
one-dimensional.

As a first application of the above statements,
we can now easily see that a complex Clifford algebra with $n$ generators
cannot be realized as a $\left(\Z_2\right)^k$-commutative algebra
with $k<n+1$.
Indeed, the dimension of the Clifford algebra is equal to $2^n$
and there are exactly $2^n$ even elements in $\left(\Z_2\right)^{n+1}$.
Our claim then follows from Corollary \ref{GenCol}.

\section{Reducing the abelian group}\label{RedSect}

In this section we prove Theorem \ref{RedThm}.

\subsection{Universality of $\left(\Z_2\right)^n$}\label{RedUniSect}

Let $\G$ be a finitely generated abelian group.
By the fundamental theorem of finitely generated abelian groups,
one can write $\G$ as a direct product
$$
\G=\Z^n\times
\Z_{n_1}
\times\cdots\times
\Z_{n_m},
$$
where $n_i=p_i^{k_i}$ and where $p_1,\ldots,p_m$ are
not necessarily distinct prime numbers.
Assume that there is a non-trivial bilinear map
$\langle\,,\,\rangle:\G\times\G\to\Z_2$.

In the case where $\G=\G'\times\G''$,
if $\langle{}y,x\rangle=0$ for all $x\in\G'$ and $y\in\G$, then
the component $\G'$ is not significative in the $\G$-grading of $\A$,
i.e., $\A$ is $\G''$-commutative.

Let us choose one of the components,
$\G'=\Z$ or $\G'=\Z_{n_i}$.
We can assume that there exists an element $x\in\G$,
such that the map
$$
\varphi_x:\G'\to\Z_2,
$$
defined on $\G'$ by $\langle\,,x\rangle$ defines a
non-trivial group homomorphism.

In the case where $\G'=\Z$, the only non-trivial homomorphism is
defined by $\varphi(0)=0$ and $\varphi(1)=1$.
Therefore, one can replace the component $\G'=\Z$ of $\G$ by $\G'=\Z_2$.
The algebra $\A$ remains $\G$-commutative.

In the case where $\G'=\Z_{n_i}$, a non-trivial homomorphism exists if and only if
$n_i$ is even.
We may then assume that $n_i=2^{k_i}$.
However, in this case, again the only non-trivial homomorphism is
defined by $\varphi(0)=0$ and $\varphi(1)=1$ so that
one replaces the component $\G'=\Z_{2^{k_i}}$ of $\G$ by $\G'=\Z_2$ 
without loss of $\G$-commutativity.

The first part of Theorem \ref{RedThm} is proved.

\subsection{Universality of the scalar product}\label{RedScalSect}

Let us now prove the second part of Theorem \ref{RedThm},
namely that the standard scalar product is the only relevant bilinear map
from $\G$ to $\Z_2$.

Given an arbitrary bilinear symmetric form 
$\beta:\left(\Z_2\right)^n\times\left(\Z_2\right)^n\to \Z_2$, 
we will show that there exists an integer $N\leq 2n$
and an abelian group homomorphism 
$$
\sigma:\left(\Z_2\right)^n\to\left(\Z_2\right)^N
$$
such that 
$$
\langle \sigma(x),\sigma(y)\rangle =
\beta(x,y),
$$
for all $x,y\in\left(\Z_2\right)^n$ and
where $ \langle \,,\,\rangle$ is
the standard scalar product on $\left(\Z_2\right)^N\times\left(\Z_2\right)^N$.

Consider the standard basic  elements of $\Z_2^n$:
$$
\e_i=(0, \cdots, 0,1, 0,\cdots,0),
$$
where all the entries are zero except the $i$-th entry that is equal to one.
The form~$\beta$ is completely determined by the numbers
$$
\beta_{i,j}:=\beta(\e_i,\e_j),
\qquad
1\leq i,j\leq n.
$$
We first construct a family of vectors $\sigma_i$ in 
$\left(\Z_2\right)^n$ that have the following property
$$
\langle \sigma_i, \sigma_j\rangle=\beta_{i,j},
\qquad
\hbox{for all}
\quad
i\not=j.
$$
The explicit formula for $\sigma_i$ is: 
\begin{equation*}
\begin{array}{rccccccccccl}
\sigma_1&=&(&1&, &0&,&0&,& \cdots& 0&)\\[6pt]
\sigma_2&=&(&\beta_{12}&, &1&, &0&,&\cdots&0&) \\[6pt]
\sigma_3&=&(&\beta_{13}&,& \beta_{23}-\beta_{12}\beta_{13} &,&1&,&\cdots&0& )\\[6pt]
&\vdots&&\vdots&&\vdots\\[6pt]
\sigma_n&=&(&\beta_{1n}&, &  \beta_{2n}-\beta_{12}\beta_{1n} &, &\cdots&,&\cdots&1&) 
\end{array} 
\end{equation*}
This construction does not guarantee that 
$\langle \sigma_i, \sigma_i\rangle=\beta_{i,i}$. 
However, by adding at most $n$ columns, we can obviously
satisfy these identities as well.

Theorem \ref{RedThm} is proved.

\medskip

Since now on, we will assume that $\G=\left(\Z_2\right)^n$;
this is the only group relevant for the notion of $\G$-commutative algebra.
We will also assume that the bilinear map~$\langle\,,\,\rangle$ 
is the usual scalar product.

\section{Completing the classification}

In this section we prove Theorem \ref{CliffC}.

\subsection{The complex case}

Let $\A$ be a simple associative $\left(\Z_2\right)^n$-commutative algebra over $\C$.
We will be considering minimal sets of homogeneous generators of $\A$.
By definition, two homogeneous elements $\a,\b\in\A$ either commute or anticommute.
We thus can organize the generators in two sets
$$
\{\alpha_1,\cdots, \alpha_p\}\cup \{\beta_1,\cdots, \beta_q\}
$$
where the subset $\{\alpha_1,\cdots, \alpha_p\}$ is the biggest subset 
of pairwise anticommutative generators: $\a_i\a_j=-\a_j\a_i$, while each
generator $\b_i$ commutes with at least one generator $\a_j$.
We can assume $p\geq 2$ otherwise the algebra $\A$ is commutative and so it is
$\C$ itself (by Frobenius theorem).

Among the minimal sets of homogeneous generators we choose one with the greatest $p$. 
If $q=0$ then $\A$ is exactly $\Cl_p$. 
Suppose that $q>0$.

\begin{lem}
\label{lem1alpha}
The generators $\{\beta_1,\cdots, \beta_q\}$ can be chosen in such
a way that every $\beta_i$ commutes with exactly one element in  
$\{\alpha_1,\cdots, \alpha_p\}$. 
\end{lem}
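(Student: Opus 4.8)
The idea is to work in the setting of $\left(\Z_2\right)^n$-commutativity with the scalar product (as justified by Theorem \ref{RedThm}), and to exploit Corollary \ref{GenCol}: every non-zero homogeneous component is one-dimensional and contains a square root of the unit $\e$. I will also freely use Lemma \ref{lem1} (homogeneous elements are not zero divisors), so that homogeneous elements are invertible up to scalar. The key structural fact is that commutativity and anticommutativity of homogeneous elements is governed \emph{only} by the scalar product of their $\left(\Z_2\right)^n$-degrees, and this pairing is bilinear; so ``$\beta_i$ commutes with $\alpha_j$'' reads $\langle\overline{\beta_i},\overline{\alpha_j}\rangle=0$, a linear condition in $\overline{\beta_i}$.

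\medskip

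\textbf{Main steps.} First, fix one $\beta_i$ and let $S_i\subset\{\alpha_1,\dots,\alpha_p\}$ be the set of $\alpha_j$'s with which it commutes; we are given $|S_i|\geq 1$ and want $|S_i|=1$. The point is that the $\alpha_j$'s are pairwise anticommuting, so $\langle\overline{\alpha_j},\overline{\alpha_k}\rangle=1$ for $j\neq k$, while $\langle\overline{\alpha_j},\overline{\alpha_j}\rangle=0$ (Corollary \ref{EvenRrop}: everything is even). Suppose $|S_i|\geq 2$, say $\alpha_1,\alpha_2\in S_i$, i.e. $\langle\overline{\beta_i},\overline{\alpha_1}\rangle=\langle\overline{\beta_i},\overline{\alpha_2}\rangle=0$. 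Consider the homogeneous element $\beta_i':=\beta_i\,\alpha_1\,\alpha_2$ (nonzero by Lemma \ref{lem1}, homogeneous of degree $\overline{\beta_i}+\overline{\alpha_1}+\overline{\alpha_2}$). One computes, using bilinearity, that $\beta_i'$ anticommutes with \emph{every} $\alpha_j$ for which $\beta_i$ did, and with $\alpha_1,\alpha_2$ it behaves oppositely: $\langle\overline{\beta_i'},\overline{\alpha_1}\rangle=\langle\overline{\beta_i},\overline{\alpha_1}\rangle+\langle\overline{\alpha_1},\overline{\alpha_1}\rangle+\langle\overline{\alpha_2},\overline{\alpha_1}\rangle=0+0+1=1$. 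So replacing $\beta_i$ by $\beta_i'$ strictly reduces $|S_i|$. The second step is to check this replacement is \emph{legitimate}: $\{\alpha_1,\dots,\alpha_p,\beta_1,\dots,\beta_{i-1},\beta_i',\beta_{i+1},\dots,\beta_q\}$ still generates $\A$ (since $\alpha_1,\alpha_2$ are invertible up to scalar, $\beta_i$ is recovered from $\beta_i'$), and $\{\alpha_1,\dots,\alpha_p\}$ is still a maximal pairwise-anticommuting subset with the greatest possible $p$ — this is where one uses that $p$ was chosen maximal: if $\beta_i'$ anticommuted with \emph{all} of $\alpha_1,\dots,\alpha_p$, it could be adjoined to them, contradicting maximality, so $\beta_i'$ still commutes with at least one $\alpha_j$ and the bookkeeping is preserved. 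Third, iterate: repeatedly replacing $\beta_i$ decreases $|S_i|$ by one each time (it stays $\geq 1$ by maximality of $p$), so after finitely many steps $|S_i|=1$. Finally, do this for each $i=1,\dots,q$ in turn; one should check that adjusting $\beta_i$ does not spoil the ``exactly one'' property already arranged for $\beta_1,\dots,\beta_{i-1}$ — and indeed the modification of $\beta_i$ only multiplies it by a product of $\alpha$'s, leaving all other generators untouched, so there is no interference.

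\medskip

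\textbf{Main obstacle.} The routine bilinear computations are harmless; the delicate point is the interaction with the two maximality hypotheses (maximal $p$, and within that a minimal generating set). Concretely: after replacing $\beta_i$ by $\beta_i\alpha_1\alpha_2$, I must be sure the new element genuinely still falls in the ``$\beta$'' bucket rather than becoming a candidate $\alpha$, and that the set remains \emph{minimal} — one must rule out that the new $\beta_i'$ becomes redundant or that some old generator now becomes redundant, which could change $p$ or $q$ in an uncontrolled way. I expect the cleanest way to handle this is to phrase the whole argument as a downward induction on the total quantity $\sum_i |S_i|$, arguing that each elementary move strictly decreases it while preserving $p$, $q$ and minimality, and invoking maximality of $p$ precisely to guarantee each $|S_i|$ never drops to $0$. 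That bookkeeping, rather than any single computation, is the heart of the proof.
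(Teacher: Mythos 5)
Your proof is correct and follows essentially the same route as the paper: multiply $\beta_i$ by a product of two $\alpha$'s with which it commutes, observe via bilinearity of the degree pairing that this flips exactly those two to anticommuting while leaving the rest unchanged, iterate, and invoke maximality of $p$ to rule out reaching zero. One small slip: each such move decreases $|S_i|$ by \emph{two}, not one (as your own computation shows, both $\alpha_1$ and $\alpha_2$ flip), but this does not affect the argument.
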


\begin{proof}
Suppose $\beta_i$ anticommutes with $\alpha_1, \cdots, \alpha _s$
and commutes with $\alpha_{s+1}, \cdots, \alpha_{p}$. 
Changing $\beta_i$ to $\tilde{\beta}_i:=\alpha_{s+1}\alpha_{s+2}\beta_i$,
one obtains a new generator that anticommutes with $\alpha_1, \cdots, \alpha _{s+2}$ 
and commutes with $\alpha_{s+3}, \cdots, \alpha_{p}$. 
Repeating this procedure, we can change $\beta_i$ 
to a new generator that commutes with at most one of the $\alpha_j$'s.  

By the assumption of maximality of $p$, the generator $\beta_i$ has to commute 
with at least one of the $\alpha_j$'s.  
\end{proof}

\begin{lem}
\label{lembeta}
The generators $\{\beta_1,\cdots, \beta_q\}$ can be chosen 
commuting with each other and with the same generator $\alpha_p$. 
\end{lem}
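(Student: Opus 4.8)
The plan is to leverage Lemma \ref{lem1alpha}, which already gives us each $\b_i$ commuting with \emph{exactly one} of the $\a_j$'s, and to massage the $\b$'s further by two independent normalizations: first fixing the \emph{same} $\a_j$ for all of them, then making the $\b$'s pairwise commuting. For the first part, suppose $\b_i$ commutes with $\a_{j_i}$ and anticommutes with all other $\a$'s. If $j_i \neq p$, replace $\b_i$ by $\tilde\b_i := \a_{j_i}\a_p\b_i$. Since $\b_i$ commutes with $\a_{j_i}$ and anticommutes with $\a_p$, the product $\a_{j_i}\a_p$ introduces exactly the sign changes needed so that $\tilde\b_i$ anticommutes with $\a_{j_i}$ and commutes with $\a_p$; and for any other $\a_k$ ($k \neq j_i, p$), both $\a_{j_i}$ and $\a_p$ anticommute with $\b_i$-type behavior is inherited, giving two sign flips that cancel, so $\tilde\b_i$ still anticommutes with $\a_k$. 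One must check $\tilde\b_i$ together with the unchanged $\a$'s (and the other $\b$'s) still forms a generating set of the same size — this is clear since $\a_{j_i}, \a_p$ are invertible (Corollary \ref{GenCol} gives $\a_j^2 = \e$ up to scalar), so $\b_i$ is recovered from $\tilde\b_i$. After doing this for each $i$, every $\b_i$ commutes with $\a_p$ alone.

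For the second part — making the $\b_i$'s pairwise commuting — I would work by induction, fixing $\b_1$ and modifying $\b_2, \ldots, \b_q$ one at a time so each commutes with $\b_1$, then passing to the subset $\{\b_2, \ldots, \b_q\}$ and repeating. Given $\b_1$ and some $\b_i$ with $i \ge 2$, if they anticommute I want to adjust $\b_i$ by multiplying with a homogeneous element $z$ that anticommutes with $\b_1$ but commutes with $\a_p$ (and does not disturb the anticommutation of $\b_i$ with $\a_1, \ldots, \a_{p-1}$, nor the commutation with $\a_p$ and with the already-fixed $\b_j$'s). Since the relevant commutation data is governed by the bilinear form $\langle\,,\,\rangle$ on $\left(\Z_2\right)^n$, this is a linear-algebra question over $\Z_2$: I need a degree $\bar z$ with $\langle \bar z, \overline{\b_1}\rangle = 1$, $\langle \bar z, \overline{\a_p}\rangle = 0$, $\langle \bar z, \overline{\a_k}\rangle = 0$ for $k < p$, etc. A natural candidate is $z = \a_1\a_p$ or some short monomial in the $\a$'s; one checks that $\a_1\a_p$ commutes with $\a_p$, anticommutes with each $\a_k$ for $k \neq 1,p$... but actually the cleanest choice uses the $\b_1$ itself: if $\b_1$ anticommutes with $\b_i$, consider modifying via an $\a$-monomial chosen to fix the parity bookkeeping. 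The point is that the space of achievable degree-corrections is large enough (dimension counting in $\left(\Z_2\right)^n$) to solve the finitely many linear constraints simultaneously, because the $\a$'s contribute $p$ independent "directions" and the constraint from $\b_1$ is one more.

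The main obstacle I anticipate is the bookkeeping in the inductive step of the second part: when I alter $\b_i$ to make it commute with $\b_1$, I must be careful that I do not destroy the properties established so far — commutation with $\a_p$, anticommutation with $\a_1, \ldots, \a_{p-1}$, the fact that it is still part of a minimal generating set of the same cardinality, and (in later stages of the induction) commutation with the already-normalized $\b_2, \ldots, \b_{i-1}$. Keeping all these constraints consistent requires choosing the correcting factor $z$ to lie in a precisely specified joint kernel/level set of several $\Z_2$-linear functionals, and one needs to verify this set is nonempty. I expect this reduces to a short argument that the functionals "$\langle -, \overline{\a_1}\rangle, \ldots, \langle -, \overline{\a_p}\rangle$" are linearly independent on $\left(\Z_2\right)^n$ (which follows from the $\a_i$ being genuine anticommuting Clifford generators, so their degrees must be in general position with respect to $\langle\,,\,\rangle$), after which the required $z$ exists by standard dimension counting, and one can in fact take $z$ to be a suitable product of the $\a_j$'s — making the whole argument constructive, parallel to the proof of Lemma \ref{lem1alpha}.
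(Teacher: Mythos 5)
Your first normalization (forcing every $\beta_i$ to commute with the single generator $\alpha_p$ by multiplying with $\alpha_{j_i}\alpha_p$) is sound and is exactly the device the paper uses in its inductive step. The gap is in your second part. The paper does not repair an anticommuting pair $\beta_i,\beta_j$ by a correcting factor; it shows the situation cannot occur: once both $\beta_i$ and $\beta_j$ commute with $\alpha_p$ and anticommute with $\alpha_1,\dots,\alpha_{p-1}$, if they also anticommuted with each other then $\{\alpha_1,\dots,\alpha_{p-1},\beta_i,\beta_j\}$ would be a set of $p+1$ pairwise anticommuting homogeneous generators, contradicting the maximality of $p$. Your plan never invokes this maximality in the second part, which is the essential ingredient; a proof of the lemma that does not use it cannot be correct, since the statement is false without it.

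Concretely, the correcting element $z$ you are looking for does not exist among the available homogeneous elements. You need $\langle\bar z,\overline{\alpha_k}\rangle=0$ for all $k=1,\dots,p$ together with $\langle\bar z,\overline{\beta_1}\rangle=1$. Since the algebra is even (Corollary \ref{EvenRrop}) one has $\langle\overline{\alpha_i},\overline{\alpha_i}\rangle=0$ and $\langle\overline{\alpha_i},\overline{\alpha_j}\rangle=1$ for $i\ne j$, so for a monomial $z=\alpha_{i_1}\cdots\alpha_{i_k}$ the first family of constraints forces either $k=0$ or $k=p$ with $p$ odd; in the latter case $\langle\bar z,\overline{\beta_1}\rangle=p-1\equiv 0$, so the last constraint fails. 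Throwing $\beta_1$ into the monomial does not help either, because $\langle\overline{\beta_1},\overline{\beta_1}\rangle=0$. Your ``dimension counting in $\left(\Z_2\right)^n$'' is also not legitimate as stated: $z$ must be an actual invertible homogeneous element of $\A$, and the set of degrees realized in $\A$ is in general a proper subset of $\left(\Z_2\right)^n$, so the linear algebra over $\Z_2$ does not guarantee a realizable solution. Replace the whole second part by the maximality contradiction above and the proof closes.
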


\begin{proof}
The construction of such a set of generators will be obtained by induction.
Consider $\beta_1$ and $\beta_2$. 
By Lemma \ref{lem1alpha}, they both commutes with one of the 
$\alpha$-generators. 
Let $\beta_1$ commutes with $\alpha_{p_1}$ and $\beta_2$ commutes with $\alpha_{p_2}$.
There are four cases:
\begin{itemize}
\item[(1)] $\beta_1$ commutes with $\beta_2$ and $\alpha_{p_1}=\alpha_{p_2}$,
\item[(2)] $\beta_1$ anticommutes with $\beta_2$ and $\alpha_{p_1}=\alpha_{p_2}$,
\item[(3)] $\beta_1$ commutes with $\beta_2$ and $\alpha_{p_1}\not=\alpha_{p_2}$,
\item[(4)] $\beta_1$ anticommutes with $\beta_2$ and $\alpha_{p_1}\not=\alpha_{p_2}$,
\end{itemize}

Let us show that we can always choose the generators
in such a way that the case~(1) holds.
Indeed, in the case (2), we get a bigger set  
$$
\{\alpha_1,\cdots, \alpha_p\}\setminus\{\alpha_{p_1}\}\cup\{\beta_1,\beta_2\}
$$ 
of pairwise anticommutative generators. 
Therefore, case (2) is not possible.

In the case (3), we replace $\beta_2$ by $\tilde{\beta_2}:=\beta_1\beta_2\alpha_{p_2}$. 
The new generator $\tilde{\beta_2}$ anticommutes with all the $\alpha_j$'s, 
where $j\not=p_1$, commutes with $\alpha_{p_1}$ and anticommutes with $\beta_1$. 
Thus, we got back to case (2), that is a contradiction.

In the case (4), we replace $\beta_1$ by $\tilde{\beta_2}:=\beta_1\beta_2\alpha_{p_2}$. 
The generator $\tilde{\beta_2}$ anticommutes with all the $\alpha_k$, $k\not=p_1$, 
commutes with $\alpha_{p_1}$ and commutes with $\beta_1$.

Thus, we obtain a set of generators satisfying the case (1).
This provides the base of induction.

Suppose that $\ell$ generators $\{\beta_1,\cdots, \beta_ \ell\}$
pairwise commute and commute with the same generator $\alpha_p$. 
Consider an extra generator $\beta_{\ell+1}$
that commutes with $\a_{p'}$. 
Replacing $\beta_{\ell +1}$ by 
$\widetilde{\beta}_{\ell +1}:=\alpha_{p}\alpha_{p'}\beta_{\ell +1}$,
one obtains a generator commuting with $\a_p$ and
anticommuting with all the $\a_j$'s, for $j\not=p$. 
If there is a $\beta_i$, for $i\leq \ell$ such that $\beta_i$ and $\widetilde{\beta}_{\ell+1}$ 
anticommute, then we get a bigger set
of pairwise anticommuting generators:
$$
\{\alpha_1,\cdots, \alpha_{p-1},\beta_i,\widetilde{\beta}_{\ell+1}\}
$$
that contradicts the maximality of $p$.

In conclusion, we have constructed a set $\{\beta_1,\cdots, \beta_{\ell+1}\}$
of pairwise commuting elements that commute with the same element $\alpha_p$
and anticommute with the rest of $\a_j$'s.
\end{proof}

We now choose a set of generators 
$\{\alpha_1,\cdots, \alpha_p,\,\beta_1,\cdots, \beta_q\}$
as in Lemma \ref{lembeta}. 
In addition,  we can normalize the generators by $\alpha_i^2=\beta_i^2=\e$ for all $i$.
Indeed, by Theorem \ref{RedThm} we assume $\G=\left(\Z_2\right)^n$ and
then use Corollary \ref{GenCol}.

\begin{lem}
\label{LastLeM}
If the number $q$ of commuting generators is greater than zero, then
the algebra $\A$ cannot be simple.
\end{lem}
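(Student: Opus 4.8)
The plan is to produce, from a single commuting generator $\b_1$, an explicit nontrivial proper two-sided ideal of $\A$, contradicting simplicity. The key structural fact we have to exploit is that $\b_1$ commutes with $\a_p$ and anticommutes with every other $\a_j$ (and with every other $\b_i$, by Lemma \ref{lembeta}); also $\b_1^2=\e$ and $\a_p^2=\e$. First I would observe that the element $e_{\pm}:=\tfrac12(\e\pm\b_1)$ is idempotent and that $\b_1$ anticommutes with the element $\a:=\a_1\cdots\a_{p-1}$ (product of the generators other than $\a_p$), since $\b_1$ anticommutes with each of the $p-1$ factors and $p-1$ may be taken odd — and if $p-1$ is even one multiplies in $\a_p$ as well, exploiting that $p\ge 2$ so such a "sign-flipping" homogeneous element always exists. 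The upshot I want is a homogeneous element $u\in\A$ with $u\,\b_1=-\b_1\,u$ and $u$ invertible (a product of normalized generators, hence its own inverse up to sign by associativity). Then conjugation by $u$ swaps $e_+$ and $e_-$.

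Next I would consider the left (equivalently, by Lemma \ref{lem1}-style arguments, two-sided) ideal generated by $e_+=\tfrac12(\e+\b_1)$. Because $\b_1$ is central up to sign — it commutes with $\a_p$ and with all $\b_i$, anticommutes with the remaining $\a_j$ — for any homogeneous $x$ one has $x\,e_+ = e_{\pm}\,x$ depending on the sign $(-1)^{\langle\bar x,\bar\b_1\rangle}$; hence $e_+\A e_+ \oplus e_-\A e_-$ together with the off-diagonal pieces $e_+\A e_-$, $e_-\A e_+$ gives a Peirce-type decomposition, and the point is that $e_+\A e_+$ is a proper nonzero subspace (it contains $e_+\ne 0$ but not $\e$, since $e_+\ne\e$ as $\b_1\ne\e$; indeed $\b_1$ has degree $\ne 0$). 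The cleanest route: show that $I:=\A e_+\A$ is a proper two-sided ideal. It is nonzero since $e_+\ne0$. It is proper because, writing $f:=\e-e_+=e_-$, one checks $f\,I\,f=0$ would force... — actually the sharpest argument is that $\b_1$ generates, inside the (associative, $\left(\Z_2\right)^n$-commutative) subalgebra it spans with $\e$, a copy of $\C[\b_1]/(\b_1^2-\e)\cong\C\oplus\C$, whose two primitive idempotents $e_{\pm}$ are preserved or swapped by every homogeneous element of $\A$; since $\A$ is simple, $e_+$ cannot be central, so some homogeneous $u$ swaps $e_+\leftrightarrow e_-$, and then $\A = \A e_+\A$ would give $e_- = \sum a_k e_+ b_k$, which after multiplying on both sides by $e_-$ and using $e_- e_+ = 0$ yields $e_-=0$, a contradiction. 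Hence $\A e_+\A\subsetneq\A$ is a proper nonzero two-sided ideal.

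The step I expect to be the main obstacle is the bookkeeping that guarantees the existence of the sign-flipping invertible homogeneous element $u$ with $u\b_1 u^{-1} = -\b_1$ while keeping everything homogeneous and invertible: one must treat the parity of $p-1$ and use that $q>0$ forces $\b_1$ to exist and that the maximality of $p$ from Lemma \ref{lembeta} pins down exactly which $\a_j$'s it anticommutes with. Once that element is in hand, the ideal-theoretic contradiction is short. An alternative, perhaps smoother, finish avoiding $u$ entirely: simply note $(\e+\b_1)(\e-\b_1)=\e-\b_1^2=0$, so $\e-\b_1$ is a zero divisor; it is homogeneous of nonzero degree (degree $\bar\b_1\ne 0$) — wait, $\e-\b_1$ is \emph{not} homogeneous — so instead use that the \emph{two-sided annihilator} construction of Lemma \ref{lem1} fails only for homogeneous elements, whereas here $e_+$ is inhomogeneous, so $\{x : e_+ x = 0\} = \A e_-$ is a genuine proper nonzero two-sided ideal (two-sided because $e_+$ commutes or anticommutes with each homogeneous $x$, sending $e_+x$ to $\pm x e_{\mp}$, keeping the annihilator stable on both sides). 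This directly contradicts simplicity, completing the proof, and shows that $q=0$, i.e. $\A\cong\Cl_p$.
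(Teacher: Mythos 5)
There is a genuine gap: both of your proposed ways of producing a proper ideal from the idempotent $e_+=\tfrac12(\e+\b_1)$ fail, and they fail for the same structural reason. The element $\b_1$ anticommutes with $\a_1,\ldots,\a_{p-1}$ while $\e$ commutes with everything, so $\e$ and $\b_1$ do \emph{not} have matching commutation behaviour with the generators. Concretely: (a) you yourself exhibit a homogeneous $u$ (e.g.\ $u=\a_1$, which exists since $p\ge 2$) with $u e_+ u^{-1}=e_-$; but then $e_-\in\A e_+\A$, hence $\e=e_++e_-\in\A e_+\A$ and $\A e_+\A=\A$ --- there is no contradiction to derive, and your computation ``multiply by $e_-$ on both sides and use $e_-e_+=0$'' overlooks the cross terms $e_-ae_+be_-=abe_-$ arising when both $a$ and $b$ anticommute with $\b_1$. (b) The annihilator $\{x: e_+x=0\}=e_-\A$ is a right ideal but not a left ideal: for $y$ anticommuting with $\b_1$ one has $y\,e_-a=e_+\,ya$, which lands outside $e_-\A$; your parenthetical ``sending $e_+x$ to $\pm xe_{\mp}$'' is exactly the sign mismatch that destroys two-sidedness. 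The existence of the sign-flipping $u$, which you flag as the main obstacle, is in fact trivial --- and it is precisely what kills the ideal you are trying to build.

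The paper's proof repairs this with one small but essential change: instead of $\e+\b_1$ it uses $\a_p+\b_1$. By Lemma \ref{lembeta}, $\a_p$ and $\b_1$ commute or anticommute with every generator \emph{simultaneously} (both anticommute with $\a_1,\ldots,\a_{p-1}$ and commute with $\a_p,\b_1,\ldots,\b_q$), so $(\a_p+\b_1)c=\widetilde{c}(\a_p+\b_1)$ for all $c$ and $(\a_p+\b_1)\A$ is genuinely two-sided. Properness then follows from your own zero-divisor idea transplanted to this element: since $\a_p$ and $\b_1$ commute and are normalized, $(\a_p-\b_1)(\a_p+\b_1)=\a_p^2-\b_1^2=0$, so $\e=(\a_p+\b_1)a$ would force $\a_p=\b_1$, contradicting minimality of the generating set. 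Your overall strategy (exploit $\a_p^2=\b_1^2=\e$ to manufacture a zero divisor and hence a proper ideal) is the right one, but the argument does not close without replacing $\e$ by $\a_p$.
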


\begin{proof}
First, the space $\left(\a_p+\b_1\right)\A$ is an ideal of $\A$. 
Indeed, this space is clearly a left ideal. 
It is also a right ideal because  $\alpha_p$ and $\beta_1$
commute or anticommute with any generator of $\A$ simultaneously.
Therefore, 
$$
\left(\a_p+\b_1\right)c=\widetilde{c}\left(\a_p+\b_1\right).
$$

It remains to show that $\left(\a_p+\b_1\right)\A$ is a proper ideal.
If $\A=(\alpha_p+\beta_1)\A$ then we can write 
$$
\e=\left(\a_p+\b_1\right)a,
$$
for some $a\in \A$.
Multiplying this equality by $(\alpha_p-\beta_1)$, we get
$$
(\alpha_p-\beta_1)=(\alpha_p-\beta_1)\left(\a_p+\b_1\right)a.
$$
But we have 
$$
(\alpha_p-\beta_1)(\alpha_p+\beta_1)=\alpha_p^2-\beta_1^2=
\e-\e=0
$$
so we deduce 
$\alpha_p-\beta_1=0.$
This is not possible because the set of generators is chosen minimal
so that $\a_p$ and $\b_1$ are different.
\end{proof}

Combining Lemmas \ref{lem1alpha}--\ref{LastLeM}, we conclude
that $\A$ is generated by homogeneous anticommuting generators.
Theorem \ref{CliffC} is proved in the complex case.

\subsection{The real case}

In the case of a real simple associative $\G$-commutative algebra $\A$,
the component of degree $0\in\G$ can be one- or two-dimensional:
 $$\A_0=\R \; \text{ or } \C,$$
 see Corollary \ref{FroCol}.
 
 \medskip
 
\textbullet Case $\A_0=\R$.

\medskip
 
 We proceed in a similar way as in the complex case. 
 Lemma \ref{lembeta} still holds.
 The proof of Lemma \ref{LastLeM} is however false
 because we can not assume that 
 $\alpha_p^2=\beta_1^2=\e$. 
 We can only assume  $\alpha_i^2=\pm\e$ and $\beta_i^2=\pm\e$ for all $i$
 (we again use Corollary \ref{GenCol}).
 
 Choose a system of homogeneous generators 
$$
\{\alpha_1,\cdots, \alpha_p\}\cup \{\beta_1,\cdots, \beta_q\}
$$
such that:

\goodbreak

\begin{enumerate}

\item
The system is minimal;

\item
The generators $\{\alpha_1,\cdots, \alpha_p\}$ pairwise anticommute
and the number $p$ of anticommuting generators
is maximal (within the minimal sets of homogeneous generators);

\item
The elements $ \{\beta_1,\cdots, \beta_q\}$ pairwise commute,
they also commute with $\alpha_p$
and anticommute with $\alpha_1,\ldots,\a_{p-1}$.

\end{enumerate}

The existence of such a system of generators is guaranteed by Lemma \ref{lembeta}.

\begin{lem}
\label{PlohLem}
The number $q$ of commuting generators is zero.
\end{lem}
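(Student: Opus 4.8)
The plan is to mimic the complex-case argument of Lemma \ref{LastLeM} as closely as possible, working with the element $\alpha_p + \beta_1$, but one must first deal with the fact that we no longer know $\alpha_p^2 = \beta_1^2 = \e$; we only have $\alpha_p^2 = \pm\e$ and $\beta_1^2 = \pm\e$ from Corollary \ref{GenCol}. The key observation is that the product $\alpha_p\beta_1$ is again a homogeneous element (its degree is $\overline{\alpha_p}+\overline{\beta_1}$), it is nonzero by Lemma \ref{lem1}, and since $\alpha_p$ and $\beta_1$ commute we have $(\alpha_p\beta_1)^2 = \alpha_p^2\beta_1^2 = \pm\e$. If $\alpha_p^2\beta_1^2 = \e$, then $\gamma := \alpha_p\beta_1$ satisfies $\gamma^2 = \e$; if $\alpha_p^2\beta_1^2 = -\e$, I would instead work with an element whose square is $+\e$ after possibly adjusting by a scalar, using that we are over $\R$ and can rescale a real generator — or, more robustly, replace $\beta_1$ by $\beta_1' := \alpha_p\beta_1$ (still commuting with $\alpha_p$, still commuting/anticommuting with every generator exactly as $\beta_1$ did, since $\alpha_p$ does) so that one of the two cases $\alpha_p^2 = \beta_1^2$ always becomes available.

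More concretely, I would argue as follows. First, note that $\alpha_p$ and $\beta_1$ have the same commutation behavior with every homogeneous element of $\A$: both commute with $\alpha_p$ and with every $\beta_j$, and both anticommute with $\alpha_1,\dots,\alpha_{p-1}$. Hence for any $c \in \A$ one has $(\alpha_p + \beta_1)c = \widetilde{c}(\alpha_p + \beta_1)$ and likewise $(\alpha_p - \beta_1)c = \widetilde{c}(\alpha_p - \beta_1)$, so both $(\alpha_p+\beta_1)\A$ and $(\alpha_p-\beta_1)\A$ are two-sided ideals. Now compute
\[
(\alpha_p - \beta_1)(\alpha_p + \beta_1) = \alpha_p^2 - \beta_1^2 + \beta_1\alpha_p - \alpha_p\beta_1 = \alpha_p^2 - \beta_1^2,
\]
using that $\alpha_p$ and $\beta_1$ commute. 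If $\alpha_p^2 = \beta_1^2$ this is $0$, and the argument of Lemma \ref{LastLeM} goes through verbatim: $\A = (\alpha_p+\beta_1)\A$ would force $\e = (\alpha_p+\beta_1)a$, hence $\alpha_p - \beta_1 = (\alpha_p-\beta_1)(\alpha_p+\beta_1)a = 0$, contradicting minimality. The remaining case is $\alpha_p^2 = -\beta_1^2$; here I would replace the generator $\beta_1$ by $\beta_1'' := \alpha_p \beta_1 \beta_2$ when $q \ge 2$, or handle $q = 1$ separately, arranging in either case that the new commuting generator $\beta$ satisfies $\beta^2 = \alpha_p^2$, and then apply the preceding paragraph.

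The main obstacle I anticipate is the sign bookkeeping in the case $\alpha_p^2 = -\beta_1^2$ together with the subcase $q = 1$ (no second commuting generator is available to multiply in). I expect the cleanest resolution is the following: consider instead the element $\alpha_p\beta_1 \in \A_{\overline{\alpha_p}+\overline{\beta_1}}$; it commutes with $\alpha_p$ (since $\beta_1$ does and $\alpha_p$ commutes with itself) and anticommutes with each of $\alpha_1,\dots,\alpha_{p-1}$ (since $\beta_1$ does and $\alpha_i$ anticommutes with $\alpha_p$, two sign flips — wait, that gives commuting). The correct statement is that $\alpha_p\beta_1$ anticommutes with $\alpha_p$ when... — this is exactly the place where care is needed, and I would determine the commutation type of $\alpha_p\beta_1$ with each generator directly from the bilinear form and then pick whichever of $\alpha_p\beta_1$ or $\alpha_p$ pairs with a suitable partner to produce two homogeneous elements $x, y$ with the same commutation behavior and $x^2 = y^2 = \e$, at which point $(x-y)(x+y) = 0$ and the ideal argument closes. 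Once that is settled, combining with Lemmas \ref{lem1alpha}--\ref{lembeta} shows $q = 0$, i.e. $\A$ is generated by pairwise anticommuting generators, so $\A \cong \Cl_{p,q}$ for suitable signature, completing the real case of Theorem \ref{CliffC}.
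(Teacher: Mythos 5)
Your argument for the case $\alpha_p^2=\beta_1^2$ is correct and is exactly what the paper does: $\alpha_p$ and $\beta_1$ have identical commutation behaviour with every generator, so $(\alpha_p+\beta_1)\A$ is a two-sided ideal, and $(\alpha_p-\beta_1)(\alpha_p+\beta_1)=0$ shows it is proper. But the case $\alpha_p^2=-\beta_1^2$ is where the real content of the lemma lies, and there your proposal has a genuine gap. Your two suggested repairs both fail: rescaling a generator by a real scalar $\lambda$ changes its square by $\lambda^2>0$ and so cannot flip the sign; and the replacement $\beta_1\mapsto\alpha_p\beta_1$ changes the commutation type, since $\alpha_p\beta_1$ \emph{commutes} with every $\alpha_j$, $j<p$ (two sign flips, as you half-noticed), so it no longer shares the commutation behaviour of $\alpha_p$ and the sum-times-difference trick does not apply to the pair $(\alpha_p,\alpha_p\beta_1)$. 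Moreover $(\alpha_p\beta_1)^2=-\e$ in the bad case, so this central element does not directly produce an ideal either. You end by deferring exactly the point that needs an idea.

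The paper's resolution requires two ingredients you do not have. For $q\ge2$: the $q+1$ elements $\alpha_p,\beta_1,\dots,\beta_q$ all pairwise commute, all have the same commutation type with every generator, and all square to $\pm\e$, so by pigeonhole two of them have equal squares and their sum generates a proper ideal. For the remaining case $q=1$ with $\alpha_p^2=-\beta_1^2$, the argument splits on the \emph{parity of $p$}, which never enters your proposal: if $p$ is even, $\widetilde{\beta}_1=\beta_1\alpha_1\cdots\alpha_{p-1}$ anticommutes with all of $\alpha_1,\dots,\alpha_p$, contradicting maximality of $p$; if $p$ is odd, the two elements $\widetilde{\alpha}=\alpha_1\cdots\alpha_p$ and $\widetilde{\beta}=\alpha_1\cdots\alpha_{p-1}\beta_1$ are central, have opposite squares, so one of them (say $\widetilde{\alpha}$) satisfies $\widetilde{\alpha}^2=\e$, and then $(\e+\widetilde{\alpha})\A$ is a proper two-sided ideal because $(\e-\widetilde{\alpha})(\e+\widetilde{\alpha})=0$. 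Note the shape of the final step is different from the one you were aiming at: the proper ideal is generated by $\e+\widetilde{\alpha}$, a sum involving the unit, not by a sum of two generators of equal square.
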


\begin{proof}
If $\alpha_p^2=\beta_1^2$ then, as in the complex case,
the element $\alpha_p+\beta_1$ generates a proper ideal.
We will assume that 
$$\alpha_p^2=-\e,
\qquad
\beta_1^2=\e.
$$

If $q\geq 2$ then among $\alpha_p, \beta_1,\cdots, \beta_q$ 
at least two elements have same square.
The sum of these two elements again generates a proper ideal.
We therefore have the last possibility: $q=1$.
  
  If $p$ is even, then we replace $\beta_1$ by 
  $$
  \widetilde{\beta}_1=\beta_1\alpha_1\cdots \alpha_{p-1}.$$
  It is then easy to check that $\widetilde{\beta}_1$ anticommutes with all the $\alpha_i$'s.
  Therefore we have obtained a system of $p+1$ pairwise anticommuting generators
  $\{\alpha_1,\cdots, \alpha_p,\,\widetilde{\beta}_1\}$.
  This is a contradiction with the maximality of $p$.
  
  Finally, we assume that $p$ is odd. Let us introduce the elements
  \begin{eqnarray*}
  \widetilde{\alpha}&=&\alpha_1\cdots \alpha_p,\\
  \widetilde{\beta}&=&\alpha_1\cdots \alpha_{p-1}\beta_1.
  \end{eqnarray*}
  It is easy to check that $ \widetilde{\alpha}$ and $ \widetilde{\beta}$
  both commute with all the generators $\alpha_i$'s and with $\beta_1$.
  Furthermore,
  \begin{eqnarray*}
  \widetilde{\alpha}^2&=&(\alpha_1\cdots \alpha_{p-1})^2\, \alpha_p^2\\
  \widetilde{\beta}^2&=&(\alpha_1\cdots \alpha_{p-1})^2\,\beta_1^2\, .
  \end{eqnarray*}  
  Since we are in the case of $\alpha_p^2\not=\beta_1^2$, we have 
  $ \widetilde{\alpha}^2\not=
  \widetilde{\beta}^2$. 
  Assume without loss of generality that $ \widetilde{\alpha}^2=\e$.
  The space
  $\left(\e+ \widetilde{\alpha}\right)\A$
  is a (two-sided) ideal of $\A$ because $\e$ and $ \widetilde{\alpha}$ 
  commute with all the generators of $\A$.
  In addition,
  $$
  (\e- \widetilde{\alpha})(\e+\widetilde{\alpha})=
  \e- \widetilde{\alpha}^2=0
  $$
  implies that $\e$ does not belong to $(\e+ \widetilde{\alpha})\,\A$
  so that the ideal is proper.
  \end{proof}
  
In conclusion, the existence of an element $\beta_1$ leads to contradictions.
Consequently, the algebra $\A$ is generated by pairwise anticommutative generators.
Therefore, $\A$ is isomorphic to a real Clifford algebra.
Theorem \ref{CliffC} is proved in the case  $\A_0=\R$.

 \medskip
 
\textbullet 
Case $\A_0=\C$.

\medskip
 
Since the zero component of $\A$ is two-dimensional,
one has the following statement.

\begin{lem}
\label{TwoComL}
Every non-trivial homogeneous component $\A_\g$ is 2-dimensional
and contains elements $\a_+$ and $\a_-$ such that
$$
\left(\a_+\right)^2=\e,
\qquad
\left(\a_-\right)^2=-\e.
$$
\end{lem}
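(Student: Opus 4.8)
The plan is to mimic the argument used for $\A_0=\R$, but now exploiting the fact that $\A_0=\C$ already contains a distinguished square root of $-\e$, namely the scalar $\ii\in\C\subset\A_0$. First I would observe that by Corollary \ref{GenCol}'s real analogue, every non-trivial homogeneous component $\A_\g$ has $\A_\g\cdot\A_\g\subset\A_0=\C$, and by Lemma \ref{lem1} this product is non-zero; moreover for $a\in\A_\g$ the map $b\mapsto ab$ is a non-zero $\C$-linear (or conjugate-linear — one must check which) map $\A_\g\to\C$ depending on the parity-type of $\g$. The key point is that $\A_\g$ is a module over $\A_0=\C$, hence a complex vector space, and $a^2\in\C$ for each homogeneous $a\in\A_\g$.

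Next I would pick any non-zero $a\in\A_\g$ and examine $a^2\in\C^\times$. Writing $a^2=r e^{i\theta}$ with $r>0$, I want to rescale $a$ by a complex scalar $\lambda\in\C$ so that $(\lambda a)^2=\lambda^2 a^2$ becomes $\pm\e$. Since $\lambda$ ranges over all of $\C^\times$, $\lambda^2$ ranges over all of $\C^\times$, so I can certainly arrange $(\lambda a)^2=\e$; call this element $\a_+$. Then set $\a_-:=\ii\,\a_+$, using the central element $\ii\in\A_0$. Since $\ii$ is central (it lies in $\A_0\subset$ the center? — at least it commutes with everything because $\A_0=\C$ acts as scalars and $\G$-commutativity with a degree-$0$ element is genuine commutativity), we get $\a_-^2=\ii^2\a_+^2=-\e$. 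Both $\a_+$ and $\a_-$ lie in $\A_\g$ since multiplication by the scalar $\ii$ preserves the homogeneous component. This gives the two claimed elements.

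It remains to show $\dim_\R\A_\g=2$, equivalently $\dim_\C\A_\g=1$. Suppose $a,b\in\A_\g$ are both non-zero. Then $ab\in\A_0=\C$, and by Lemma \ref{lem1} $ab\neq 0$, so $ab=c\e$ for some $c\in\C^\times$. Hence $a\cdot(b)=a\cdot(c'\a_+)$ forces, after comparing, that $b$ differs from $a$ by multiplication by an element of $\C$ on the appropriate side; more carefully, fixing $\a_+$ with $\a_+^2=\e$, for any $a\in\A_\g$ the product $\a_+ a\in\C$, say $\a_+ a=\mu\e$, and then $\a_+(\a_+ a)=\a_+\mu\e=\mu\a_+$, while $\a_+(\a_+ a)=(\a_+)^2 a=a$ (using associativity and $\a_+^2=\e$), so $a=\mu\,\a_+$. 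Thus $\A_\g=\C\cdot\a_+$ is one-dimensional over $\C$, i.e.\ $2$-dimensional over $\R$.

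The main obstacle I anticipate is bookkeeping the two "parity types" of homogeneous components: depending on whether $\langle\bar{a},\bar{a}\rangle$ and the relevant cross-pairings are $0$ or $1$, the map $a\mapsto \a_+ a$ may be $\C$-linear or conjugate-$\C$-linear as a map $\A_\g\to\C=\A_0$, and one must make sure that "rescale by $\lambda$ to kill the phase of $a^2$" and "the final element is unique up to $\C$-scaling" still go through in the conjugate-linear case — which they do, since $\lambda\mapsto\bar\lambda\lambda^{\pm 1}$ or $\lambda\mapsto\lambda^2$ are all surjective onto $\C^\times$, but the argument must be phrased so as not to depend on which case occurs. A secondary minor point is verifying that $\ii\in\A_0$ really is central in all of $\A$ (not merely in $\A_0$); this follows because any element of degree $0$ commutes with every homogeneous element by \eqref{ComPr}, since $\langle 0,\bar b\rangle=0$.
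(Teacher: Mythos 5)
Your proposal is correct and follows essentially the same route as the paper: use that $\A_0=\C$ is central (degree $0$ commutes with everything), rescale a nonzero $a\in\A_\g$ by a complex square root to get $\a_+$ with $\a_+^2=\e$, set $\a_-=\ii\,\a_+$, and then multiply by $\a_+$ and use associativity together with Lemma~\ref{lem1} to pin down the dimension. The only cosmetic difference is that you prove $\A_\g=\C\,\a_+$ directly via $a=(\a_+a)\,\a_+$, whereas the paper argues by contradiction from $\dim\A_\g\geq 3$; your worry about conjugate-linearity is moot for exactly the reason you give at the end.
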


\begin{proof}
Denote the basis of $\A_0$ by $\left\{\e,i\right\}$.
A non-zero component $\A_\g$ is at least two-dimensional
since for every $\a\in\A_\g$, the element $i\,\a$ is linearly
independent with~$\a$.
Indeed, if $\l\,\a+\mu\,i\a=0$, with $\l,\mu\in\R$, then
$(\l+\mu\,i)\a=0$, thus, by Lemma \ref{lem1}, $\l+\mu\,i=0$,
so that $\l=\mu=0$.
Furthermore, combining $\a$ and $i\,\a$, one easily finds
two elements $\a_+,\a_-\in\A_\g$ such that
$\left(\a_+\right)^2=\e$ and $\left(\a_-\right)^2=-\e$.

It remains to prove that $\dim\A_\g=2$.
Suppose $\dim\A_\g\geq3$.
That there exists $\b\in\A_\g$ linearly independent with
$\a$ and~$i\,\a$.
Since $\a\b\in\A_0$, there is a linear combination
$\l\,\e+\m\,i+\nu\,\a\b=0$.
Multiplying this equation by $\a$ 
(and assuming without loss of generality $\a^2=\e$), one has:
$$
\l\,\a+\mu\,i\,\a+\nu\,\b=0.
$$
Hence a contradiction.
\end{proof}

The algebra $\A$ is therefore a $\C$-algebra.
The end of the proof of Theorem \ref{CliffC} in the case where
$\A_0=\C$ is exactly the same as in the complex case.

Theorem \ref{CliffC} is now completely proved.
  
\section{Non-associative extensions of the Clifford algebras} \label{NonAsSect}

In this section, we construct simple
$\left(\Z_2\right)^n$-commutative algebras extending the Clifford algebras.
The algebras we construct contain the Clifford algebras as even parts.
According to Corollary \ref{EvenRrop}, such algebras cannot be associative.

Recall that, if $\A$ is a $\G$-commutative algebra, then the space
$\End(\A)$ is naturally $\G$-graded.
A homogeneous linear map $T\in\End(\A)$ is called a derivation of $\A$ if
for all homogeneous $a,b\in\A$ one has
\begin{equation}
\label{InvGrad}
T
\left(
ab
\right)=
T\left(a\right)\,{}b
+
(-1)^{\langle\overline{T},\bar{a}\rangle}\,
a\,T\left(b\right).
\end{equation}
This formula then extends by linearity for arbitrary $T$ and $a,b$.
The space $\mathrm{Der}(\A)$ of all derivations of $\A$ is a $\G$-graded Lie algebra.

We will restrict our considerations to the case of
$\left(\Z_2\right)^n$-commutative algebras that have
non-trivial odd derivations.
This means we assume that there exists a derivation
$T$ exchanging $\A^0$ and $A^1$.
We think that this assumption is quite natural since
in this case the two parts of $\A$ are not separated from each other.

\subsection{Classification in the $\Z_2$-graded case}

Let us start with the simplest case of
$\Z_2$-commutative algebras.
The following statement provides a classification of such algebras.

\begin{prop}
\label{Z2Prop}
There exist exactly two
simple $\Z_2$-commutative algebras $\A$ with the 
following two properties:
\begin{itemize}
\item
The even part $\A^0=\K$, where $\K=\C$ or $\R$;
\item
There exists a non trivial odd derivation $T$ of $\A$.
\end{itemize}
\end{prop}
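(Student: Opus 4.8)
The plan is to exploit the $\Z_2$-commutativity together with the existence of an odd derivation to pin down the algebra structure almost completely by dimension count. First I would recall from Corollary \ref{FroCol} (or rather its analogue: here $\A$ is $\Z_2$-commutative but \emph{not} associative, so I should argue directly) that $\A^0=\K$ with $\K=\C$ or $\R$ is the prescribed even part, and $\A^1$ is the odd part; by $\Z_2$-commutativity an odd element $a$ satisfies $a^2=-a^2$, hence $a^2=0$ in the associative-square sense — but wait, the algebra is \emph{non-associative}, so $a\cdot a$ need not vanish; rather the commutativity rule gives $a\cdot a = (-1)^{1}a\cdot a$, forcing $a\cdot a=0$ only if we read $\langle\bar a,\bar a\rangle = 1$. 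So every odd element squares to zero. Next, let $T$ be a non-trivial odd derivation: $T$ maps $\A^0=\K$ into $\A^1$ and $\A^1$ into $\A^0$. Since $T(\e)=T(\e\cdot\e)=T(\e)\e+\e T(\e)=2T(\e)$ (the sign is $+$ because $\langle\bar T,\bar\e\rangle=\langle 1,0\rangle=0$), we get $T(\e)=0$; wait, that says $T(\e)=2T(\e)$ hence $T(\e)=0$, so $T$ restricted to $\A^0$ is zero, contradicting non-triviality unless $\A^0$ is bigger — but $\A^0=\K$ is one- or two-dimensional. Hmm: if $\K=\C$, then $T(i)$ can be a nonzero odd element even though $T(\e)=0$. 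If $\K=\R$, then $T|_{\A^0}=0$ forces $T$ to be nonzero only on $\A^1$, and then $T(\A^1)\subseteq\A^0=\R\e$. In either case the key consequence is that $\A^1\neq 0$, so by the argument in Corollary \ref{EvenRrop}-style reasoning the simplicity must come from the interaction of $\A^0$ and $\A^1$; pick $a\in\A^1$ with $T(a)=\e$ (possible after rescaling, in the real case), and set $b$ to be a second odd element so that the multiplication $\A^1\times\A^1\to\A^0$ is captured by $a\cdot b\in\A^0$.

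The heart of the argument is then: show $\dim_\K\A^1\le 2$ and extract the relations. For this I would use the derivation identity repeatedly. Applying $T$ to $a\cdot b$ (a product of two odd elements landing in $\A^0$) and to $\e\cdot a$ gives linear relations among the structure constants; in particular, writing $\e\cdot a=\lambda a$ and using $T(\e\cdot a)=T(\e)a+\e T(a)$ one deduces (after normalizing $T(a)=\e$) that $\e\cdot\e=\e$ forces $\lambda=\tfrac12$, which is exactly the relation $\e\,a=\tfrac12 a$ appearing in \eqref{aslA}. Then $a\cdot b\in\A^0$: applying $T$ and using $T(a)=\e$, $T(b)$, one shows $a\cdot b$ must equal $\e$ (up to scale and choice of basis), recovering $a\,b=\e$. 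The $\Z_2$-commutativity gives $b\cdot a = -a\cdot b$, and simplicity is then checked by hand: any proper ideal would have to be graded-compatible with the three-dimensional structure $\{\e; a,b\}$, and the relations $\e\,\e=\e$, $\e\,a=\tfrac12 a$, $a\,b=\e$ leave no room for one. The "two" algebras in the statement correspond to the two choices $\K=\R$ (the tiny Kaplansky superalgebra $K_3=\asl_2$, three-dimensional over $\R$) and $\K=\C$ (its complexification, or equivalently the second algebra with the extra even element $i$ playing a role), so after establishing the $\K=\R$ normal form I would separately run through $\K=\C$, where $\A^0=\C$ contributes one more dimension and the relations are the complex analogue of \eqref{aslA}.

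The main obstacle I anticipate is controlling $\dim\A^1$ without associativity: the clean zero-divisor arguments of Lemma \ref{lem1} used associativity crucially, so here I must replace them by derivation-based arguments, and I need to be careful that $T$ alone may not force $\A^1$ to be small — one likely needs to combine $T$ with the multiplication map $\A^1\otimes\A^1\to\A^0$ and simplicity. Concretely, the subtle point is ruling out $\dim_\K\A^1\ge 2$ beyond the span of a single $a$ and possibly $i\cdot a$: I expect the right move is to show that if $a,a'\in\A^1$ are linearly independent with $T(a)=T(a')=\e$ then $a-a'\in\ker T\cap\A^1$ generates a proper ideal (using $a\cdot b=\e$-type relations to see it is proper), contradicting simplicity — so $T|_{\A^1}$ is injective modulo the obvious $\K$-scaling and $\dim\A^1=\dim\A^0$. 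Once dimension is nailed down, the remaining identification of structure constants is the routine calculation I would not grind through here, landing on exactly the two algebras, one being $K_3$ with $\mathrm{Der}(K_3)=\osp(1|2)$ as noted after \eqref{aslA}.
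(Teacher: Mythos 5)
There is a genuine gap, and it occurs at the very first computation. You write $T(\e)=T(\e\cdot\e)=T(\e)\e+\e T(\e)=2T(\e)$ and conclude $T(\e)=0$. But $\e$ is only the unit of $\A^0=\K$; the algebra $\A$ is non-associative and $\e$ need \emph{not} act as the identity on the odd part. Graded commutativity (odd times even carries sign $+1$) gives $T(\e)\e=\e T(\e)$, so the correct identity is $T(\e)=2\,\e\,T(\e)$, which yields $\e\,a=\tfrac12 a$ for $a:=T(\e)$ --- it does not yield $T(\e)=0$. This matters because the paper's proof splits into the two cases $T(\e)=0$ and $T(\e)\neq0$, shows that the first case always produces a proper ideal (either $\langle \e,a\rangle$ or $\K a$) and hence contradicts simplicity, and then derives the entire structure from the second case. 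Your erroneous simplification collapses you into precisely the case that cannot occur, and the factor $\tfrac12$ in the relations of \eqref{aslA}, which you later recover by a separate normalization, is in fact forced at this first step.

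The second, independent error is your identification of the ``two'' algebras with the choices $\K=\R$ and $\K=\C$. The ground field is fixed in the statement; the count of two comes from the residual structure constant in the product $\e\,b$. After establishing $\dim\A^1=2$ with basis $\{a,b\}$, $T(\e)=a$, $T(a)=0$, $ab=\e$, the derivation identity pins down $\e\,a=\tfrac12 a$ and $\e\,b=\tfrac12 b+\lambda a$ but leaves $\lambda$ free; the algebras with $\lambda\neq0$ are all isomorphic to one another but not to the $\lambda=0$ algebra (which is $K_3$). Your plan, as written, would produce only one isomorphism class over each field and miss the $\lambda\neq0$ algebra entirely. Finally, your proposed mechanism for bounding $\dim\A^1$ (injectivity of $T|_{\A^1}$ modulo scaling) is not what works: in the correct case one has $T(a)=0$ with $a\neq0$, so $T|_{\A^1}$ has a large kernel; the paper instead proves that the multiplication $\A^1\times\A^1\to\K\e$ is a non-degenerate skew form, shows $\ker T=(\K a)^{\perp\omega}$, and derives a contradiction from any symplectic basis vector beyond $a,b$.
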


\begin{proof}
Denote by $\e$ the unit element of $\A^0$.
There are two different possibilities.

\begin{enumerate}

\item
$T(\e)=0$, i.e., $T\,|_{\A^0}=0$;

\item

$T(\e)=a\not=0$.

\end{enumerate}

In the case (1), there exists an odd element $a$ such that $T(a)=\e$. 
One then has 
$$
\A^1=\K\,a\oplus\ker T.
$$ 
Moreover,
$
\e\,b=0,
$
for all $b\in \ker T$.
Indeed, $0=T(ab)=T(a)\,b=\e\,b$.
One has
$
\e\,a=\lambda\,a+b,
$
 for some $\lambda \in \K$ and $b\in \ker T$.
Applying (\ref{InvGrad}) one gets
$$
T(\e a)=T(\e)\,a+\e\,T(a)=\e.
$$
On the other hand,
$$
T(\e a)=T(\lambda a+b)=\lambda\,\e.
$$
Therefore, $\lambda =1$ and $\e (a+b)=a+b$.
Replacing $a$ by $a+b$, one gets
$\e\,a =a.$

 It follows that:
 a) if the space $\ker T$ is non-trivial, then
 the space spanned by $\e$ and $a$ is a proper ideal;
 b) if $\ker T$ is trivial, then $ \K\,a$ is a proper ideal. 
This is a contradiction with the simplicity
assumption, therefore the case (1) cannot occur.\\
 
 Consider the case (2) where $T(\e)=a\not=0$.
 From
 $
 T(\e)=T(\e \e)=2\e\,T(\e)
 $
 one deduces
$
 \e\,a= \frac{1}{2} a.
$
 Applying $T$, one obtains
$
 T( a)= 0.
$
 
 \begin{lem}
 \label{NonDeGL}
The product in $\A$ restricted to $\A^1$, that is,
$\A^1\times \A^1\rightarrow \K\,\e$
is a non-degenerate bilinear skewsymmetric form. 
 \end{lem}
 
 \begin{proof}
 Suppose there exists $c\in \A^1$ such that $cb=0$ for all $b\in \A^1$. 
 We will show that we also have $\e c=0$.
 Consequently the element
 $c$ generates a two-sided ideal that contradicts the assumption of simplicity.
 
One has $T(c)=\mu\, \e$ for some $\mu$. 
On the one hand,  
$$
\textstyle
T(c)\,a=\mu\,\e a =\frac{\mu}{2}\,a.
$$
On the other hand, 
$$
T(c)\,a=T(ca)-c\,T(a)=0.
$$ 
Therefore one obtains $\mu =0$, i.e.,
$T(c)=0.$

We can find $b\in \A^1$ such that 
$ba=\e.$
Indeed, if $ba=0$ for all $b\in \A^1$ then $\C\,a$ is an ideal.
From $\e\,a=\half\,a$ and
$$
T(ba)=T(\e)=a
=T(b)\,a-b\,T(a)=T(b)\,a
$$
one deduces
$T(b)=2\e.$
Now, from
$$
T(bc) =0
=T(b)\,c-b\,T(c)=2\,\e c
$$
one gets 
$\e c=0.$
  \end{proof}

Let us denote the bilinear form from Lemma \ref{NonDeGL}
by $\om$.

\begin{lem}
The space $\A^1$ is 2-dimensional.
\end{lem}

\begin{proof}
The dimension of $\A^1$ is even and there exists a basis 
$\{a_1, \cdots, a_m,b_1, \cdots, b_m\}$ such that 
$a_i\,b_j=\delta_{i,j}\,\e.$
Moreover, we can assume $a_1=a$.

For all $b\in \ker T$ one has 
$
T(ab)=T(a)\,b-a\,T(b)=0,
$
since we have proved $T(a)=0$.
So necessarily $ab=0$.
This implies:
$
\ker T \subset\left(\K\,a\right)^{\perp\om}.
$
Since the dimensions of these spaces are the same we have
$$
\ker T = 
\left(\K\,a\right)^{\perp\om}
=\left<a_1, \cdots, a_m,\, b_2, \cdots b_m\right>
$$
If $m>1$ then we have two vectors $a_2, b_2$ such that 
$$
T(a_2b_2)=T(a_2)\,b_2-a_2\,T(b_2)=0.
$$
But $a_2b_2=\e$ and $T(\e)\not=0$. 
This is a contradiction.
\end{proof}

So far we have shown that $\A$ is 
$1|2$-dimensional and has a basis $\{\e;a,b\}$ such that
$$
\textstyle
\e\,a =\half\,a,
\qquad
a\,b=\e.
$$
We want now to determine the product $\e\,b$.

In general, 
$$
\e\,b=\l\,a+\mu\,b,
\qquad
T(b)=
\nu\,b.
$$
Applying the derivation $T$ to the expression for $a\,b$, one obtains
$$
a=T(\e)=T(ab)=T(a)\,b-a\,T(b)=-a\,\nu\e,
$$
that implies $\nu=-2$.
Applying $T$ to the above expression for $\e\,b$, one has
$$
-2\mu\,\e=T(\e\,b)=T(\e)\,b+\e\,T(b)=a\,b+\e\,(-2\,\e),
$$
that gives $\mu=\half$.
The parameter $\l$ cannot be found from the above equations.

One obtains the most general formula for the product
of the basis elements:
$$
\begin{array}{l}
\e\,{}\e=\e\\[6pt]
\e\,{}a=\half\,a,
\quad
\e\,{}b=\half\,b+\l\,a,\\[6pt]
a\,{}b=\e.
\end{array}
$$
To complete the proof, one
observes that all the algebras corresponding to $\l\not=0$
are isomorphic to each other but not isomorphic to the algebra
corresponding to $\l=0$.
One therefor has exactly two non-isomorphic algebras.

Proposition \ref{Z2Prop} is proved.
\end{proof}

If $\l=0$, we recognize the algebra $K_3(\K)$
(see formula~\eqref{aslA}).
This algebra is parti\-cularly interesting.
This is the only simple $Z_2$-commutative algebra that have the Lie superalgebra
$\osp(1|2)$ as the algebra of derivations, see~\cite{Ovs} and also \cite{MG}.
The algebra of derivations in the case $\l\not=0$ is of dimension $1|1$.

\subsection{Extended Clifford algebras}

We associate to every Clifford algebra a 
simple $\left(\Z_2\right)^{n+1}$-commutative algebra 
of dimension $2^n|2^{n+1}$.
It is defined by
the tensor product with $K_3$.
More precisely, we define the ``extended Clifford algebras''
\begin{equation}
\label{VotOni}
 \A=\Cl_n\otimes_\C{}K_3(\C),
 \qquad
 \hbox{and}
 \qquad
 \A=\Cl_{p,q}\otimes_\R{}K_3(\R)
\end{equation}
 in the complex and in the real case, respectively.
 
 The even part $\A^0$ of each of these algebras coincides with the
 corresponding Clifford algebra: we identify $\a\otimes\e$ with $\a$.
 The odd part is twice bigger;
 every odd element is of the form
 $$
 x=\a\otimes{}a+\b\otimes{}b,
 $$
 where $\a,\b$ are elements of the Clifford algebra
 and $a,b$ are the basis elements of~$K_3$.
 The $\left(\Z_2\right)^{n+1}$-grading on $\A^1$ is defined by
 $$
 \overline{\a\otimes{}a}=\bar{\a}+(1,1,\ldots,1),
 \qquad
 \overline{\b\otimes{}b}=\bar{\b}+(1,1,\ldots,1).
 $$
 The product in $\A$ is given by
\begin{equation}
\label{ExtClifF}
\begin{array}{rcl}
 \g\left(
 \a\otimes{}a+\b\otimes{}b
 \right)&=&
 \half\left(\g\a\otimes{}a+\g\b\otimes{}b\right),\\[6pt]
 \textstyle
\left(
 \a\otimes{}a+\b\otimes{}b
 \right)\left(
 \a'\otimes{}a+\b'\otimes{}b
 \right)&=&
 \a\b'-\a'\b,
 \end{array}
\end{equation}
where $\g\in\A_0$.
 Note that $\half$ appearing in (\ref{ExtClifF}) is crucial
 (a similar formula without~$\half$ leads to an algebra with no non-trivial
 odd derivations).
   
\begin{ex}
{\rm
Let us describe with more details the
extended algebra of quaternions $\A=\bbH\otimes_\R{}K_3(\R)$.
This algebra is of dimension $4|8$.
The even part $\A^0=\bbH$ is spanned by
$\left\{\e,i,j,k\right\}$;
the odd part $\A^1$ has the basis
$
\left\{
a,a_i,a_j,a_k,\,b,b_i,b_j,b_k
\right\}
$
and the multiplication is given by the following table.

\bigskip
\begin{displaymath}
  \begin{array}{|c|c|c|c|c|c|c|c|c|c|c|}
\cline{3-3}\cline{4-4} \cline{5-5}\cline{6-6} \cline{8-8}\cline{9-9}
\cline{10-10}\cline{11-11}
   \multicolumn{1}{c|}{\cdot} & &a & a_i & a_j & a_k& &b & b_i & b_j & b_k \\
\cline{1-1} \cline{3-3}\cline{4-4} \cline{5-5}\cline{6-6} \cline{8-8}\cline{9-9}
\cline{10-10}\cline{11-11}
\multicolumn{11}{c}{}\\
\cline{1-1} \cline{3-3}\cline{4-4} \cline{5-5}\cline{6-6} \cline{8-8}\cline{9-9}
\cline{10-10}\cline{11-11}
2\e& &a & a_i & a_j & a_k& &b & b_i & b_j & b_k  \\
\cline{1-1} \cline{3-3}\cline{4-4} \cline{5-5}\cline{6-6} \cline{8-8}\cline{9-9}
\cline{10-10}\cline{11-11}
2i& & a_i & -a & a_k & -a_j & &b_i & -b & b_k & -b_j \\
\cline{1-1} \cline{3-3}\cline{4-4} \cline{5-5}\cline{6-6} \cline{8-8}\cline{9-9}
\cline{10-10}\cline{11-11}
2j& & a_j & -a_k & -a & a_i & &b_j & -b_k & -b & b_i  \\
\cline{1-1} \cline{3-3}\cline{4-4} \cline{5-5}\cline{6-6} \cline{8-8}\cline{9-9}
\cline{10-10}\cline{11-11}
2k& & a_k & a_j & -a_i & -a& &b_k & b_j & -b_i & -b \\
\cline{1-1} \cline{3-3}\cline{4-4} \cline{5-5}\cline{6-6} \cline{8-8}\cline{9-9}
\cline{10-10}\cline{11-11}
\multicolumn{11}{c}{}\\
\cline{1-1} \cline{3-3}\cline{4-4} \cline{5-5}\cline{6-6} \cline{8-8}\cline{9-9}
\cline{10-10}\cline{11-11}
 a &&0&0&0&0& &\e& i & j & k \\
 \cline{1-1} \cline{3-3}\cline{4-4} \cline{5-5}\cline{6-6} \cline{8-8}\cline{9-9}
\cline{10-10}\cline{11-11}
 a_i &&0&0&0&0& &i& -\e& k& -j \\
 \cline{1-1} \cline{3-3}\cline{4-4} \cline{5-5}\cline{6-6} \cline{8-8}\cline{9-9}
\cline{10-10}\cline{11-11}
  a_j &&0&0&0&0& &j& -k& -\e& i \\ 
  \cline{1-1} \cline{3-3}\cline{4-4} \cline{5-5}\cline{6-6} \cline{8-8}\cline{9-9}
\cline{10-10}\cline{11-11}
   a_k &&0&0&0&0& &k& j& -i& -\e \\ 
   \cline{1-1} \cline{3-3}\cline{4-4} \cline{5-5}\cline{6-6} \cline{8-8}\cline{9-9}
\cline{10-10}\cline{11-11}
\end{array}
\end{displaymath}

 \bigskip
 \noindent
The table can be completed using the multiplication of quaternions and the graded-commutativity.
For instance, we get
$$
\textstyle
a_i\, i=i\,a_i =-\half\,a, 
\qquad a_j\, i=-i\,a_j =-\half\,a_k,
$$
for the products of even and odd elements.
For the products of odd elements with each other we have:
$$
a\,b=-b\,a=\e,
\qquad
a\,b_i=-b_i\,a=i,
\qquad
a_i\,b_j=b_j\,a_i=k,
$$
etc.
}
\end{ex}

\begin{rem}
{\rm
The extended quaternion algebra $\bbH\otimes_\R{}K_3(\R)$
has an interesting resemblance to the octonion algebra.
It worth mentioning that the octonion algebra itself cannot be realized
as a $\G$-commutative algebra (cf. \cite{MO}).
We cite \cite{Eld} for a complete classification of group gradings
on the octonion algebra without the commutativity requirement.
}
\end{rem}

One can show that the constructed algebras (\ref{VotOni}) satisfy cubic identities.
For instance, the odd elements satisfy the graded Jacobi identity.
This property is not too far from the associativity of Clifford algebras.
We think that these algebras are the only possible extensions of
Clifford algebras satisfying cubic identity.

 The extended Clifford algebras have large algebras of derivations.
 The following statement can be checked by a straightforward calculation.
 
 \begin{prop}
 The algebras of derivations of the extended Clifford algebras are
 the following $\left(\Z_2\right)^{n+1}$-graded Lie algebras:
  $$
  \mathrm{Der}(\A)=\Cl_n\otimes\osp(1|2),
  \qquad
  \hbox{or}
  \qquad
  \mathrm{Der}(\A)=\Cl_{p,q}\otimes\osp(1|2),
  $$ 
  respectively.
  \end{prop}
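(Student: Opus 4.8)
The plan is to verify by direct computation that the claimed map gives a Lie-algebra isomorphism, proceeding in three stages. First I would recall the structure of $\mathrm{Der}(\A)$ as a $\left(\Z_2\right)^{n+1}$-graded Lie algebra: it splits into its even part $\mathrm{Der}(\A)^0$, consisting of derivations preserving $\A^0=\Cl_n$ and $\A^1$, and its odd part $\mathrm{Der}(\A)^1$, consisting of derivations that exchange the two pieces. Because the multiplication \eqref{ExtClifF} is built diagonally from the Clifford multiplication on the two copies $\Cl_n\otimes a$ and $\Cl_n\otimes b$ of the even part, any derivation is determined by its action on the generators $\a_i\otimes\e$ of $\Cl_n$ together with its action on, say, $\e\otimes a$ and $\e\otimes b$; so the computation reduces to a finite check on generators.

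Next I would exhibit the isomorphism explicitly. On one side we have $\Cl_n\otimes\osp(1|2)$ (respectively $\Cl_{p,q}\otimes\osp(1|2)$), where $\osp(1|2)=\mathrm{Der}(K_3)$ by the remark following Proposition~\ref{Z2Prop}. To an element $\gamma\otimes D$ with $\gamma\in\Cl_n$ and $D\in\osp(1|2)$ I assign the linear map on $\A=\Cl_n\otimes K_3$ that acts by $L_\gamma\otimes D$ on the $K_3$-factor in the appropriate graded sense, where $L_\gamma$ is left Clifford multiplication; the $\left(\Z_2\right)^{n+1}$-degree of $\gamma\otimes D$ is $\bar\gamma+\overline{D}$ (with $\overline{D}\in\{0,1\}\subset\left(\Z_2\right)^{n+1}$ embedded as $(0,\dots,0,\overline D)$, matching the grading convention used for $\A^1$). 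One then checks that this map satisfies the graded Leibniz rule \eqref{InvGrad} for the product \eqref{ExtClifF} — this is where the factor $\half$ in \eqref{ExtClifF} is used, exactly as flagged in the text — and that it intertwines the graded commutator on $\Cl_n\otimes\osp(1|2)$ (the super-tensor-product bracket, using that $\Cl_n$ is associative hence an ordinary, ungraded algebra here) with the commutator bracket on $\mathrm{Der}(\A)$.

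Finally I would prove surjectivity and injectivity. Injectivity is immediate since $L_\gamma\otimes D=0$ forces $\gamma\otimes D=0$. For surjectivity, given an arbitrary derivation $T$ of $\A$, restrict it to the generators: the even part of $T$ restricted to $\Cl_n=\A^0$ must be a derivation of the associative Clifford algebra, and since $\Cl_n$ is simple (in the cases at hand) all its derivations are inner, i.e.\ of the form $\ad_\gamma$; combined with $T$'s action on the $K_3$-directions one reconstructs $T$ in the image. The main obstacle I anticipate is bookkeeping the signs: one must keep straight three separate $\Z_2$-type gradings — the $\left(\Z_2\right)^{n+1}$-grading on $\A$, the induced $\Z_2$-parity $p(\,\cdot\,)$, and the $\Z_2$-grading of $\osp(1|2)$ — and check they are compatible so that the Koszul signs in \eqref{InvGrad} and in the tensor-product bracket line up. Since the text explicitly says the proposition ``can be checked by a straightforward calculation,'' I would present the verification on generators compactly rather than writing out the full multiplication tables, noting that the $n=1$, $\Cl_1$ (quaternion) case of the preceding example already displays all the sign patterns that occur.
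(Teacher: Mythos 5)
The paper offers no written proof of this proposition (it is dismissed as a ``straightforward calculation''), so your outline must stand on its own. Its overall shape --- define $\gamma\otimes D\mapsto L_\gamma\otimes D$, verify the graded Leibniz rule \eqref{InvGrad} against the product \eqref{ExtClifF}, then prove bijectivity --- is reasonable, and you correctly locate where the factor $\half$ enters.

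There is, however, a genuine gap in the surjectivity step. The restriction of a homogeneous derivation $T$ of $\A$ to $\A^0\cong\Cl_n$ satisfies the \emph{graded} Leibniz rule $T(xy)=T(x)\,y+(-1)^{\langle\overline{T},\bar x\rangle}x\,T(y)$, not the ordinary one, so the classical fact that every derivation of a simple associative algebra is inner does not apply. Worse, if that restriction really were a nonzero inner derivation $\ad_\gamma$, your map could not be surjective: every operator $L_{\gamma'}\otimes D$ either annihilates $\A^0$ (when $D$ is even, since even derivations of $K_3$ kill $\e$) or sends $\A^0$ into $\A^1$ (when $D$ is odd), so nothing in your image restricts to a nonzero endomorphism of $\A^0$. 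What the argument actually requires --- and what you must prove --- is that every homogeneous graded derivation of $\Cl_n$ vanishes; this follows from Corollary \ref{GenCol} together with the relations $\a_i\a_j=-\a_j\a_i$ and $\a_i^2=\pm\e$ (for instance, in $\bbH$ a graded derivation of degree $\bar i$ sends $j\mapsto\nu k$, and applying it to $j^2=-\e$ with the Koszul sign $(-1)^{\langle\bar i,\bar j\rangle}=-1$ gives $\nu(kj-jk)=0$, hence $\nu=0$). Only then does the even part of $T$ vanish on $\A^0$, so that $T$ is determined by its values on $\e\otimes a$ and $\e\otimes b$ via $\a\cdot(\e\otimes u)=\half\,\a\otimes u$, which is what places it in the image. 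A second, smaller slip: the degree of an odd $D$ must be embedded as the vector $(1,1,\ldots,1)$ used in the grading of $\A^1$, not as $(0,\ldots,0,1)$; with your convention $L_\gamma\otimes D$ would not even be homogeneous for the $\left(\Z_2\right)^{n+1}$-grading of $\A$.
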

  
  \noindent
  It is quite remarkable that,
  for every two elements $a,b\in\A$, there exists 
  $T\in\mathrm{Der}(\A)$ such that
  $T(a)=T(b)$.
  We conjecture that the defined algebras are the only simple
  $\left(\Z_2\right)^{n+1}$-commutative algebras satisfying this property.

\subsection{Further examples}

Let us show more examples of simple $\left(\Z_2\right)^{n+1}$-commuta\-tive algebras
that contain the Clifford algebras as even part.
These algebras are $2^n|2^n$-dimensional.
A nice property of of these new algebras is that they have the unit element.
However, their algebra of derivations is too small.

In the simplest case $\A_0=\bbH$, the basis of the algebra is:
$\left\{\e,i,j,k;\,a,a_i,a_j,a_k\right\}$.
The non-trivial odd derivation is as follows:
$$
\begin{array}{rrrrrcllll}
T_{(1,1,1)}:&\e,&i,&j,&k&\longmapsto&0,&a_i,&a_j,&a_k,\\[4pt]
T_{(1,1,1)}:&a,&a_i,&a_j,&a_k,&\longmapsto&\e,&0,&0,&0.
\end{array}
$$
The complete multiplication table is:

\bigskip

\begin{displaymath}
  \begin{array}{|c|c|c|c|c|c|c|c|c|c|c|}
\cline{3-3}\cline{4-4} \cline{5-5}\cline{6-6} \cline{8-8}\cline{9-9}
\cline{10-10}\cline{11-11}
   \multicolumn{1}{c|}{\cdot} & &\e & i & j & k & &a & a_i & a_j & a_k \\
\cline{1-1} \cline{3-3}\cline{4-4} \cline{5-5}\cline{6-6} \cline{8-8}\cline{9-9}
\cline{10-10}\cline{11-11}
\multicolumn{11}{c}{}\\
\cline{1-1} \cline{3-3}\cline{4-4} \cline{5-5}\cline{6-6} \cline{8-8}\cline{9-9}
\cline{10-10}\cline{11-11}
\e& & \e & i & j & k & &a & a_i & a_j & a_k \\
\cline{1-1} \cline{3-3}\cline{4-4} \cline{5-5}\cline{6-6} \cline{8-8}\cline{9-9}
\cline{10-10}\cline{11-11}
i& & i & -\e & k & -j & &\lambda a_i & 0 & \half a_k &-\half a_j \\
\cline{1-1} \cline{3-3}\cline{4-4} \cline{5-5}\cline{6-6} \cline{8-8}\cline{9-9}
\cline{10-10}\cline{11-11}
j& & j & -k & -\e & i & &\mu a_j & -\half a_k  & 0 &\half a_i \\
\cline{1-1} \cline{3-3}\cline{4-4} \cline{5-5}\cline{6-6} \cline{8-8}\cline{9-9}
\cline{10-10}\cline{11-11}
k& & k & j & -i & -\e& &\nu a_k & \half a_j   & -\half a_i & 0 \\
\cline{1-1} \cline{3-3}\cline{4-4} \cline{5-5}\cline{6-6} \cline{8-8}\cline{9-9}
\cline{10-10}\cline{11-11}
\multicolumn{11}{c}{}\\
\cline{1-1} \cline{3-3}\cline{4-4} \cline{5-5}\cline{6-6} \cline{8-8}\cline{9-9}
\cline{10-10}\cline{11-11}
 a &&a  & \lambda a_i & \mu a_j& \nu a_k& &0& i & j & k \\
 \cline{1-1} \cline{3-3}\cline{4-4} \cline{5-5}\cline{6-6} \cline{8-8}\cline{9-9}
\cline{10-10}\cline{11-11}
 a_i &&  a_i & 0 &-\half a_k & \half a_j && -i&0& 0 & 0 \\
 \cline{1-1} \cline{3-3}\cline{4-4} \cline{5-5}\cline{6-6} \cline{8-8}\cline{9-9}
\cline{10-10}\cline{11-11}
  a_j &&   a_j & \half a_k & 0&-\half a_i  && -j&0& 0 & 0  \\
  \cline{1-1} \cline{3-3}\cline{4-4} \cline{5-5}\cline{6-6} \cline{8-8}\cline{9-9}
\cline{10-10}\cline{11-11}
   a_k &&   a_k & -\half a_j &\half a_i &0&& -k&0& 0 & 0 \\
   \cline{1-1} \cline{3-3}\cline{4-4} \cline{5-5}\cline{6-6} \cline{8-8}\cline{9-9}
\cline{10-10}\cline{11-11}
\end{array}
\end{displaymath}

\bigskip
\bigskip


\noindent
where $\l,\mu,\nu\in\C$ are parameters.
The obtained algebras are isomorphic if and only if the corresponding
parameters are proportional.
One thus have a two-parameter family of algebras parametrized by
$\CP^2/\tau$, where $\tau$ is the action of the cyclic group $\Z_3$ of
coordinate permutation.

It is easy to check that the algebra of derivations $\mathrm{Der}(\A)$ does not
depend on $\l,\mu,\nu$.
This algebra is $1|1$-dimensional, it has one even generator $T_0$
and one odd generator $T_1$ satisfying the commutation relations
$[T_0,T_1]=T_1$ and $[T_1,T_1]=0$.

It is of course very easy to define an analogous construction for an
arbitrary Clifford algebra.
We will not dwell on it here.

\bigskip



\end{document}